\newtheorem{theorem}{Theorem}
\newtheorem{proposition}[theorem]{Proposition}
\newtheorem{corollary}[theorem]{Corollary}
\newtheorem{lemma}[theorem]{Lemma}
\newtheorem {remark}[theorem]{Remark}
\title[Bifurcations in a class of polycycles ]{
Bifurcations in a class of polycycles involving two saddle-nodes
on a M\"obius band}
\author[Claudio Pessoa and Jorge Sotomayor]{}
\thanks{The first author is  supported by FAPESP-Brasil grant 06/56664-0. The second author is fellow of CNPq
and has the partial support of CNPq Grant 473747/2006-5}
  \subjclass{Primary 34C35, 58F09; Secondary 34D30}
   \keywords{polycycles, lips, saddle-node, limit cycles, normal forms}
\begin{document}
 \maketitle

\centerline{\scshape Claudio Pessoa and Jorge Sotomayor}
\medskip

{\footnotesize \centerline{ Instituto de Matem\'atica e Estat\'\i
stica, Universidade de S\~ao Paulo} \centerline{Rua do Mat\~ao
1010, Cidade Universit\'aria} \centerline{05.508-090, S\~ao Paulo,
SP, Brasil } \centerline{\email{clagp@ime.usp.br,
sotp@ime.usp.br}}}

\medskip

\bigskip

\begin{quote}{\normalfont\fontsize{8}{10}\selectfont
{\bfseries Abstract.} In this  paper we study the bifurcations  of
a class of polycycles, called lips, occurring in generic
three-parameter smooth families of vector fields on a M\"obius
band. The lips consists of a set of polycycles formed by two
saddle-nodes, one attracting and the other repelling, connected by
the hyperbolic separatrices of the saddle-nodes and by orbits
interior to both nodal sectors. We determine, under certain
genericity hypotheses, the maximum number of limits cycles that
may bifurcate from a graphic belonging to the lips and we describe
its bifurcation diagram.
\par}
\end{quote}

\section{Introduction}

\noindent Let $M^2$ be a smooth $2$-dimensional manifold. Let
$X_\mu: M^2\times \Lambda\rightarrow TM^2$ be
 a
  {\it smooth
 family of vector fields on
$M^2$},
depending
 on
 p-parameters, represented by $\mu \in  \Lambda $, where
$\Lambda$ is a neighborhood of the origin in $\mathbb R^p$.

An {\it oriented polycycle}
 of a vector field,
 $X_0$,
  in a 2-dimensional
manifold
 is a cyclically ordered union of singular points,
called {\it vertices} $A_0, \;A_1, \ldots,\; A_n=A_0$ (some of
them may coincide) and different orbits $\gamma_0, \ldots,
\gamma_{n-1}$ endowed with their natural orientation, such that
for any $i=0, \ldots, n-1$ the orbit $\gamma_i=\gamma_i(t)$ tends
to the point $A_i$ as $t\rightarrow -\infty$ and tends to
$A_{i+1}$ as $t\rightarrow +\infty$. These  orbits are called {\it
connections}, or {\it arcs}  of the polycycle.

A {\it saddle-node of multiplicity $2$
 of a vector field
$X_0$
 in $M^2$}
 is a singular
point
 $p_0$
 of
$X_0$
at which its  linearization  has  only one zero eigenvalue and
the restriction of
 $X_0$
  to a center manifold
  has the form
\[
(ax^2+\cdots)\frac{\partial}{\partial x}, \;a\neq 0,\; x\in
\mathbb R,
\]
and the dots denote higher order terms. If the non-zero real
eigenvalue is positive, the saddle-node is called {\it repelling}
and, if it is negative, the saddle-node is called {\it
attracting}.

In the first (resp. second) case the basin of repulsion (resp.
attraction)  of the saddle-node, called the {\it nodal sector}, is
a half-plane bordered by the strong unstable (resp. strong stable)
separatrices and the singularity. Also the basin of attraction
(resp. repulsion) is a curve coincident to half the central
manifold, called the {\it hyperbolic separatrix}, since it is the
common boundary of the  {\it hyperbolic sectors} of the
saddle-node.

We call  {\it lips} the set of polycycles which consists of two
saddle-nodes (one repelling and one attracting) connected by the
hyperbolic separatrices and by orbits interior to both nodal
sectors (see Figure \ref{figura1}).

\begin{figure}[ptb]
\begin{center}
\psfrag{O2}[l][B]{$O_2$} \psfrag{O1}[l][B]{$O_1$}
\psfrag{G2}[l][B]{$\Gamma^+_1$} \psfrag{G1}[l][B]{$\Gamma^-_1$}
\psfrag{G3}[l][B]{$\Gamma^-_2$} \psfrag{G4}[l][B]{ $\Gamma^+_2$}
\psfrag{D1}[l][B]{$\Delta_{\nu,1}$}\psfrag{D2}[l][B]{$\Delta_{\nu,2}$}
\psfrag{F}[l][B]{$f_{\nu}$}\psfrag{G}[l][B]{$g_{\nu}$}
\includegraphics[height=5in,width=5in]{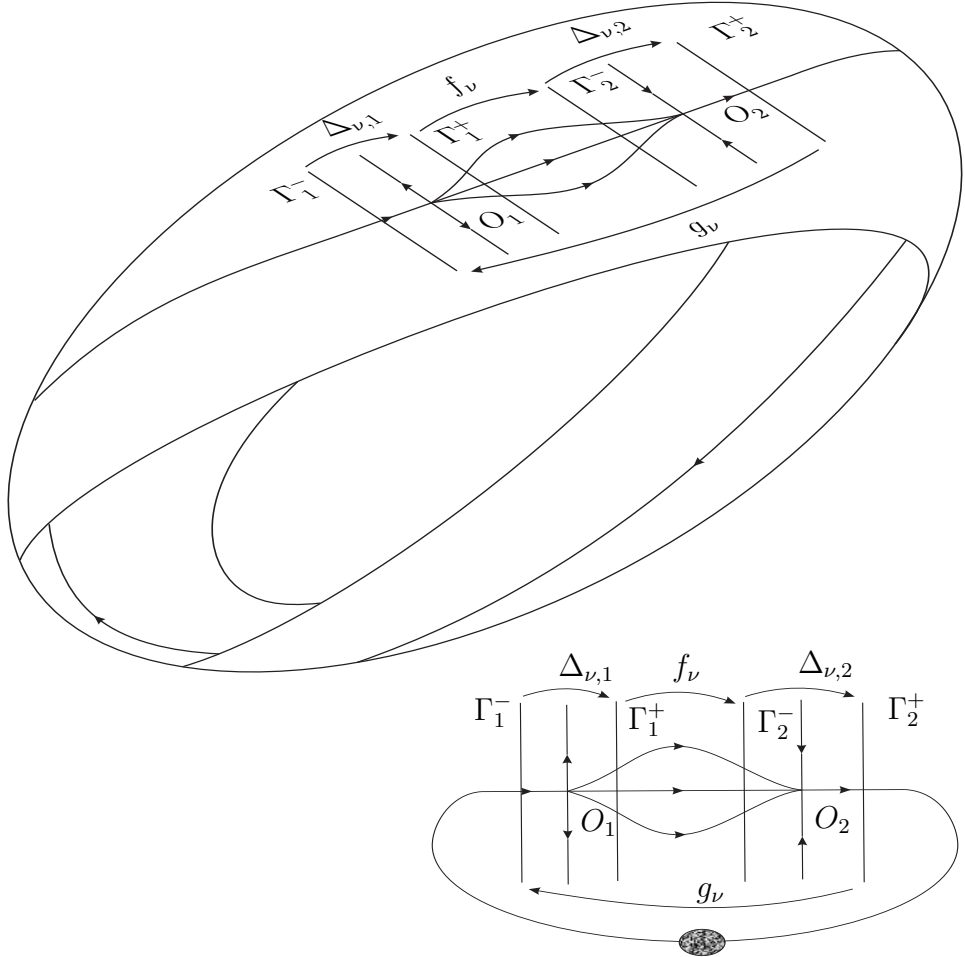}
\end{center}
\caption{Lips on a M\"obius band.} \label{figura1}
\end{figure}

It is know that limit cycles are born, i.e. bifurcate, from limit
periodic sets that are closed invariant subsets of the plane,
eventually containing arcs of nonisolated singularities of vector
fields. But since we study only generic families, with all
singularities isolated, such limit periodic sets can be only
polycycles. The question of the number of limit cycles which can
bifurcate from a polycycle occurring in a generic finite-parameter
family of vector fields, is closely related to the Hilbert
$16^{th}$ Problem (see \cite{RR} for more details).

In recent years several authors have investigated the bifurcation
of limit cycles from a set of polycycles. See, for example,
\cite{DIR}, \cite{GR}, \cite{YL}, \cite{KS} and references
therein.

This work focuses on the
 bifurcation diagram of the
 {\it lips} in the case  where the normal bundle of the
polycycles is non-orientable, i.e. diffeomorphic to a M\"obius
band. The orientable case was studied in \cite{KS}. The {\it
bifurcation diagram} is a stratification of the parameter space
such that to different strata correspond topologically distinct
phase portraits of the vector field.

The set of polycycles of lips type occur persistently in
three-parameter families of vector fields in $M^2$, because we
need two parameters to unfold the saddle-nodes and one parameter
to bifurcate the connection of separatrices of the hyperbolic
sectors of the saddle-nodes. The main tools used to describe the
bifurcation diagram of the lips is a normal form in a neighborhood
of a saddle-node given by Theorem \ref{the:01} (see Section
\ref{sec:02}). This normal form simplifies the problem. In fact,
for the case where there are singular points in the unfolding of a
saddle-node, it is trivial. See Theorem \ref{the:04}. Therefore
the case of interest corresponds to the situation where there are
not singular points, i.e. to the bifurcation of limit cycles.

If $X_\nu(x,y)$, $\nu=(\epsilon,\delta,\lambda)\in \mathbb R^3$,
is a smooth $3$-parameter family of vector fields on $M^2$ such
that for $\nu=0$ $X_0$ has a set of polycycles of lips type, then
for some values of $\nu$ we can define transition maps on the
transversal sections $\Gamma^{\mp}_{1,2}$ (see
Figure~\ref{figura1}). We compose these  maps to obtain the
Poincar\'e map $\Delta_\nu:\Gamma^+_1\rightarrow \Gamma^+_1$.
Thus, the study of the bifurcations of limit cycles from the lips
is reduced to the investigation of the bifurcations of periodic
points of  $3$-parameter families of real maps. In fact, when
studying the bifurcations of limit cycles in a M\"obius band, we
deal with bifurcations of fixed points and points of period $2$ of
the Poincar\'e map and their variations in function of the
parameters. Isolated fixed points (resp. 2-periodic points) are in
one-to-one correspondence with limit cycles that appear after
perturbation of the lips and intersect $\Gamma^+_1$ only once
(resp. twice). Moreover, simple fixed and periodic points
correspond to hyperbolic cycles, double fixed points correspond to
semistable cycles, etc.

There are two possible types of bifurcations:
\begin{itemize}
\item[(1)] splitting of multiple fixed and periodic points, and
\item[(2)] escaping of fixed and periodic points through the
boundary points of $\Gamma^+_1$.
\end{itemize}

Therefore the bifurcation surface of the Poincar\'e map, i.e. the
surface in the parameter space where the number of periodic and
fixed points change, is the union of four surfaces, $\Sigma_1$,
$\Sigma_2$ and $\Sigma_\pm$. On $\Sigma_1$ we have multiple fixed
points,
and on $\Sigma_2$, we have multiple $2$-periodic points. Now, on
$\Sigma_+$ (resp. $\Sigma_-$) there is at least one periodic point
equal to the positive extreme of $\Gamma^+_1$ (resp. the negative
extreme of $\Gamma^+_1$). In terms of bifurcations of
 the
original system, the union of the surfaces $\Sigma_1$
 and $\Sigma_2 $ corresponds to the splitting of  multiple limit
cycles, while the union $\Sigma_+\cup \Sigma_-$ corresponds to
cycles escaping from the domain where the system is considered.

We will see in Section \ref{sec:05} that to determine the periodic
points of the Poincar\'e map is equivalent to determine the roots
of an  equation
\begin{equation}
\label{eq:30} \varphi_\delta(y,p,q)=0,
\end{equation}
where the parameters $(\delta,p,q)$ are introduced by the blow-up
\linebreak$(\epsilon,\delta,\lambda)\mapsto
\Phi(\epsilon,\delta,\lambda)=(\delta,p,q)$ defined in Section
\ref{sec:03}. This blown-up takes the point $\nu=0$ into the half
plane $\mathbb R^3_{(\delta,p,q)}\cap\{p>0,\;\mbox{\rm
and}\;\delta=0\}$.  The function in equation (\ref{eq:30}) is
essentially $\Delta_\nu -y$, in the blown-up coordinates.

{For} each fixed $\delta$ we have  a surface $S_\delta\in \mathbb
R^3_{(y,p,q)}$ determined by equation \eqref{eq:30}. As we are
interested in the multiple roots of equation \eqref{eq:30}, we
must characterize the projection in the $(p,q)$-plane of the curve
$C_\delta$ given by equations
\[
\varphi_\delta(y,p,q)=0,\;\;\frac{\partial
\varphi_\delta}{\partial y}(y,p,q)=0.
\]
The characterization of this curve is given in Sections
\ref{sec:04} and \ref{sec:05}. In the parameter space $\mathbb
R^3_{(\delta,p,q)}$, the surface $\Sigma=\Sigma_1\cup\Sigma_2$ is
described by the property that its intersection with the plane
$\delta$=constant is equal to the trace of the projection of the
curve $C_\delta$ in the $(p,q)$-plane. The main result of this
work, Theorem \ref{the:08} of Section \ref{sec:06}, formalizes
this discussion and shows that $\Sigma=\Sigma_1\cup\Sigma_2$ in
$\mathbb R^3_{(\delta,p,q)}$ is diffeomorphic to the cylinder over
$\Lambda_0$ the projection of $C_0$ in the $(p,q)$-plane with axis
parallel to $\delta$ axis (see Figure~\ref{figura03}).
Analogously, we characterize the surfaces $\Sigma_\pm$, see
Theorem \ref{the:08}.


The results in this paper  can be regarded as an extension to
 a
M\"obius band of the work of Kotova and Stanzo \cite{KS} carried
out for the orientable case, that is when the normal bundle of the
polycycles are  diffeomorphic to a cylinder. The main difference
between the two cases  it is that in the nonorientable one we have
to study also the $2$-periodic points of the Poincar\'e map as
well as the flip fixed points (i.e with negative derivative)
corresponding to the one-sided periodic orbits. The nonorientable
case has the additional complication  of presenting  the flip
bifurcations  codimensions $1$ and $2$ (see \cite{BV} and
\cite{JG}).

This paper is organized as follows. Section $2$ is devoted to
review some preliminary results pertinent to the saddle-nodes and
their normal forms. Section $3$ describes the bifurcation diagram
when there is at least one singularity in the unfolding of the
lips. In Section $4$ is studied a change of coordinates which
greatly simplifies the expression of the return map.
Section $6$ is devoted to the definition of the blow-up $\Phi$ in
the parameters, opening the origin to the $(p,q)$-plane. In
Sections $7$ and $8$  are  characterized the bifurcation surfaces
$\Sigma_1$ and $\Sigma_2$. The main theorem of this paper,
presenting a synthesis of the bifurcation diagram is proved in
Section $9$ (Theorem \ref{the:08}). Section $10$ studies the
cyclicity of an individual  polycycle of the lips (Theorem 17).

\section{Preliminaries}
\label{sec:02}

In this section we present the tools to study the bifurcation
diagram of the lips.

Let $X_\mu(x,y)$, $\mu=(\mu_1,\mu_2,\mu_3)\in \mathbb R^3$ be a
smooth $3$-parameter family of vector fields on $M^2$ such that
for $\mu=0$, $X_0$ has a set of polycycles of lips type.

We denote by $O_1$ and $O_2$ the saddle-nodes of $X_0(x,y)$. In
what follows we assume that the orientation is chosen as in Figure
\ref{figura1}. As we can see between the nodal sectors of
saddle-nodes $O_1$ and $O_2$ form a region filled out by arcs of
the lips. On the topological boundary of this region, other types
of separatrix  connections  involving singular points outside the
lips may occur. In this paper, however, we  do not investigate
effects here caused by this circumstance at the boundary; instead
we choose a transversal section intersecting certain connections
and consider their saturation by the flow curves, assuming that
all of them are also connections of the lips. When the parameters
of the family change, we consider only the orbits intersecting the
same transversal section, which is assumed to be independent of
the parameters. This restriction produces the bifurcations when a
limit cycle leaves the domain under consideration.

To describe the bifurcation diagram of the lips we need the
following result that is proved in \cite{YL}.

\begin{theorem}
\label{the:01} Let $X_\mu$, $\mu\in \mathbb R^3$, be a generic
$3$-parameters smooth family of vector fields in a two manifold
$M^2$, such that $X_0$ has a saddle-node of multiplicity $2$ in
the origin $(0,0)$. Then the family $X_\mu$ may be reduced by a
finitely smooth change of coordinates, time rescaling and
parameter change to the following normal forms
\begin{equation}
\begin{array}{lclr}
\dot{x} & = & (x^2+\epsilon)(1+a(\mu)x)^{-1}, & \epsilon =\epsilon(\mu)\\
\dot{y} & = & \pm y.
\end{array}
\end{equation}

\end{theorem}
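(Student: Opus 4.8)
The plan is to reduce the family $X_\mu$ to its normal form in three successive stages, following the standard program for finitely-smooth normal forms of unfoldings of a saddle-node (as developed by Il'yashenko--Yakovenko and used in \cite{YL}). First I would use the center-manifold reduction for families: since $X_0$ has a saddle-node of multiplicity $2$ at the origin, the linearization has a simple zero eigenvalue and a nonzero real eigenvalue $b$, so there is a finitely-smooth invariant center manifold $W^c_\mu$ depending on the parameters, along which $X_\mu$ restricts to a one-dimensional family $\dot x = v_\mu(x)$ with $v_0(x)=ax^2+\cdots$, $a\neq 0$. Transverse to $W^c_\mu$ the vector field is finitely-smoothly linearizable (hyperbolic direction, by the Sternberg--Chen type argument with parameters), giving the $\dot y = \pm y$ equation where the sign is that of $b$. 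The hypothesis that the family is generic is exactly what is needed so that the reduced one-parameter family $v_\mu$ is a versal unfolding of $x\mapsto ax^2$.

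Next I would normalize the one-dimensional reduced family. After rescaling $x$ and reversing time if necessary we may assume $a>0$, and by the versality hypothesis the parameter map $\mu\mapsto$ (coefficients of $v_\mu$) can be composed with a parameter change so that $v_\mu(x) = x^2 + \epsilon(\mu) + (\text{higher order in } x)$ with $\epsilon$ a submersion in the relevant direction. A finitely-smooth change of the $x$-coordinate (depending on $\mu$) then kills all terms of order $\geq 2$ except one, which one chooses to record as the factor $(1+a(\mu)x)^{-1}$: concretely, one writes $v_\mu(x) = (x^2+\epsilon)\,u_\mu(x)$ with $u_\mu(0)\neq 0$ for the values of interest, normalizes $u_\mu(0)=1$ by time rescaling, and then absorbs the remaining $x$-dependence of $u_\mu$ into the stated rational form — the single resonant invariant being the coefficient $a(\mu)$, which is the ``modulus'' that cannot be removed (this matches the classical fact that $\dot x = x^2(1+ax)^{-1}$ is the normal form for the saddle-node, the coefficient $a$ being a smooth but not analytic invariant). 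Finally one checks that the $x$-changes and time rescalings can be carried out preserving the linearized $y$-equation, i.e. in the ``fibered'' form $(x,y)\mapsto(\phi_\mu(x),y\,\psi_\mu(x,y))$, so that $\dot y = \pm y$ survives; this uses that the $y$-equation has no resonances with the (already normalized) $x$-equation in finite jets.

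The main obstacle I expect is the interplay between the finitely-smooth (rather than $C^\infty$ or analytic) category and the parameter dependence: one must track how many derivatives are lost at each of the three reduction steps (center manifold, hyperbolic linearization, polynomial normalization of the reduced field) and verify that the composition still yields a change of coordinates of class $C^k$ for $k$ as large as one likes by shrinking neighborhoods — this is the technical heart and is precisely where I would lean on the cited reference \cite{YL} rather than reprove the normal-form theorem from scratch. A secondary subtlety is the bookkeeping of the parameter change $\mu\mapsto\epsilon(\mu)$: one needs the genericity hypothesis to guarantee $d\epsilon\neq 0$ at $\mu=0$ transversally to the ``$a(\mu)$'' direction, so that $(\epsilon,a,\text{the }\pm\text{ sign stays fixed})$ really do parametrize the unfolding; since the theorem statement keeps $\mu\in\mathbb R^3$ and only asserts the existence of functions $\epsilon(\mu)$ and $a(\mu)$, this amounts to checking that no additional independent modulus appears in the $2$-jet, which is the content of the multiplicity-$2$ (as opposed to higher multiplicity) assumption.
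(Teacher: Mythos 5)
The paper offers no proof of this theorem at all: it is stated as a known result and the proof is deferred entirely to the reference \cite{YL} (Ilyashenko--Li), with only the remark that the normalizing chart is finitely but arbitrarily smooth after shrinking its domain. Your sketch correctly reproduces the structure of the argument found in that reference (parameter-dependent center manifold, fibered linearization in the hyperbolic direction, normalization of the reduced one-dimensional family with $a(\mu)$ as the surviving modulus) and, like the paper, leans on \cite{YL} for the finitely-smooth technicalities, so it is consistent with the paper's treatment.
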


Let us give the explicit genericity assumption for the family in
the previous theorem. The family $X_\mu$ intersects transversally
at the point $X_0$ the surface of vector fields having degenerate
singular points. This means that if we consider the center
manifold of the local family $X_\mu$ and write the restriction of
the family to this manifold as
\[
\dot{x}=f(x,\mu), \; \dot{\mu}=0,\; (x,\mu)\in \mathbb R\times
\mathbb R^3,\; f(0,0)=0,
\]
then
\[
\left.\frac{\partial f(0,\mu)}{\partial \mu}\right|_{\mu= 0}\neq
0.
\]

The smoothness of the normalizing chart in the phase variables and
the parameters in the previous theorem is arbitrarily high, but
finite; the smoothness may be increased after shrinking the domain
of the normalizing chart.

In order to investigate the bifurcations of the lips, we introduce
the system of parameters $\nu=(\epsilon,\delta,\lambda)$. We will
suppose that the hyperbolic eigenvalues of saddle-nodes $O_1$ and
$O_2$ are real and have opposite signs. If
$\mu=(\mu_1,\mu_2,\mu_3)$ are the original parameters and the lips
occur for $\mu=0$, then, according to Theorem \ref{the:01}, in a
neighborhood of the saddle-node $O_1$ there exist local
coordinates in which the family of vector fields has the form
\begin{equation}
\label{eq:02}
\begin{array}{lclr}
\dot{x} & = & (x^2+\epsilon)(1+a_1(\mu)x)^{-1}, & \\
\dot{y} & = & y, & \epsilon=\epsilon(\mu).
\end{array}
\end{equation}

Let $\Gamma^\pm_{1}$ be two transversal sections to the flow that
are given in the canonical chart (i.e where expression \ref{eq:02}
holds). Without loss of generality we can suppose that
\[
\Gamma^\pm_{1}=\{(x,y):x=\pm 1,|y|\leq 1\}.
\]
The transversal section $\Gamma^-$ is the entrance gate: all
orbits crossing it enter the neighborhood of the singularity
$O_1$. The other section is the exit gate in the same sense.
Clearly, for $\epsilon
>0$ the derivative $\dot{x}$ is positive; therefore each orbit
starting on $\Gamma^-_1$ will intersect $\Gamma^+_1$ at a certain
point; we denote this transition map by
\[
\Delta_{\nu,1}:\Gamma^-_1\rightarrow \Gamma^+_1.
\]

In the same manner, there exist a normalizing chart around $O_2$,
in which the family has the form
\begin{equation}
\label{eq:03}
\begin{array}{lclr}
\dot{x} & = & (x^2+\delta)(1+a_2(\mu)x)^{-1}, & \\
\dot{y} & = & -y, & \delta=\delta(\mu).
\end{array}
\end{equation}

We take the transversal sections
\[
\Gamma^\pm_2=\{(x,y):x=\pm 1,|y|\leq 1\},
\]
and similarly as in the previous case we obtain the transition map
\[
\Delta_{\nu,2}:\Gamma^-_2\rightarrow \Gamma^+_2.
\]
Note that this map is defined only for $\delta >0$, although the
transversal sections $\Gamma^{\pm}_2$ are well defined for all
small $\delta$.

Besides the two maps $\Delta_{\nu,1}$, $\Delta_{\nu,2}$, there are
two regular transition maps along connections depending on
parameters. In the normalizing charts they can be written as
\begin{equation}
\label{eq:01} f_\nu:\Gamma^+_1\rightarrow
\Gamma^-_2,\;\;\;g_\nu:\Gamma^+_2\rightarrow \Gamma^-_1.
\end{equation}
Since for $\mu=0$ the points $O_i$ are connected along the
hyperbolic separatrices, i.e. $g_0(0)=0$ and $g_{\nu}'(0)<0$
(orientation reversing). Denote by $\lambda$ the relative
displacement of the separatrices (see Figure \ref{figura1}),
\[
\lambda=g_\nu(0).
\]

In order to proceed further, we need an additional genericity
assumptions. We will require, from now on, that the {\it Jacobian
of the map} $(\mu_1,\mu_2,$ $\mu_3)\mapsto
(\epsilon,\delta,\lambda)$ to be nonvanishing,
\[
\det \left(\left.\frac{\partial\nu}{\partial
\mu}\right|_{\mu=0}\right)\neq 0.
\]

If this condition is satisfied, we will describe the bifurcation
diagram for the lips in terms of the new parameters $\nu$ rather
than $\mu$: the above genericity assumption guarantees that the
bifurcation diagram in the original parameter space will be
diffeomorphic to the one obtained in Theorem \ref{the:08}.

Now we will determine expressions for the maps $\Delta_{\nu,1}$,
$\Delta_{\nu,2}$ defined above. In fact, by explicit integration
of the normal forms, which has separated variables, it follows
that
\begin{equation}
\label{eq:09} \Delta_{\nu,1}(y)=C_1(\epsilon)^{-1}y \;\mbox{and}\;
\Delta_{\nu,2}=C_2(\delta)y,
\end{equation}
with $C_1(\epsilon)\rightarrow 0$ when $\epsilon\rightarrow 0$ and
$C_2(\delta)\rightarrow 0$ when $\delta\rightarrow 0$. More
precisely,
\begin{equation}
\label{eq:19} C_1(\epsilon)=\exp(-\frac{2}{\sqrt{\epsilon}}\arctan
\frac{1}{\sqrt{\epsilon}})\;\mbox{and}\;C_2(\delta)=\exp(-\frac{2}{\sqrt{\delta}}\arctan
\frac{1}{\sqrt{\delta}}).
\end{equation}

\section{Description of the bifurcation diagram outside the positive quadrant $\epsilon>0, \delta>0$}

The complete description of the bifurcation diagram in the domain
of parameters where there is at least one singular point of the
vector field, is given by the following theorem.

\begin{theorem}
\label{the:04}
The bifurcation diagram in the intersection of a
small neighborhood of the origin in the space $\mathbb
R^{3}_{(\epsilon, \delta, \lambda )}$ with the set  $\{\epsilon
\leq 0\}\cup \{\delta \leq 0\}$, consists of twelve components,
corresponding to topologically nonequivalent phase portraits
differing by the type of singular points and the existence of
connections between them:
\begin{enumerate}
\item $\epsilon =\delta =\lambda =0$, two saddle-nodes connected
by a separatrix,

\item $\epsilon =\delta =0$, $\lambda \neq 0$, two saddle-nodes
without connection,

\item $\epsilon <0$, $\delta <0$, $\lambda =0$, two saddles
connected by a separatrix, one stable and one unstable node,

\item $\epsilon <0$, $\delta <0$, $\lambda \neq 0$, two saddles
without connection, a stable and an unstable node,

\item $\epsilon <0$, $\delta =0$, $\lambda =0$, saddle and
saddle-node connected by a separatrix, and also a stable node,

\item $\epsilon <0$, $\delta =0$, $\lambda \neq 0$, saddle,
saddle-node and a stable node without connection,

\item $\epsilon <0$, $\delta
>0$, a saddle and a stable node,

\item $\epsilon =0$, $\delta <0$, $\lambda =0$, a saddle and a
saddle-node connected by a separatrix, and also an unstable node,

\item $\epsilon =0$, $\delta <0$, $\lambda \neq 0$, a saddle, a
saddle-node and an unstable node without connections,

\item $\epsilon =0$, $\delta
>0$ or $\epsilon
>0$, $\delta =0$, a saddle-node,

\item $\epsilon >0$, $\delta <0$, saddle and an unstable node.
\end{enumerate}
\end{theorem}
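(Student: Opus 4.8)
The plan is to exhaust the region $\{\epsilon\le 0\}\cup\{\delta\le 0\}$ by splitting into cases according to the signs of $\epsilon$ and $\delta$, and within each case according to whether $\lambda=0$ or $\lambda\ne 0$; the enumeration in the statement is precisely this case list, so the task reduces to reading off, in each case, the local phase portrait near each of $O_1$ and $O_2$ and the presence or absence of the separatrix connection. The key observation is that, by Theorem \ref{the:01}, in a neighborhood of $O_1$ the family is $\dot x=(x^2+\epsilon)(1+a_1(\mu)x)^{-1}$, $\dot y=y$, and near $O_2$ it is $\dot x=(x^2+\delta)(1+a_2(\mu)x)^{-1}$, $\dot y=-y$. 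Hence the local singularities are governed entirely by the scalar equations $\dot x=x^2+\epsilon$ and $\dot x=x^2+\delta$ (up to the positive factor $(1+a_i x)^{-1}$, which is irrelevant near the origin): for $\epsilon<0$ one gets two hyperbolic zeros $x=\pm\sqrt{-\epsilon}$, which combined with the transverse direction $\dot y=y$ yield one saddle and one unstable node; for $\epsilon=0$ the zero is the saddle-node of multiplicity $2$ already present at $\mu=0$; and for $\epsilon>0$ there is no singularity at all, the flow crossing the whole chart in the $x$-direction. The same analysis applies verbatim to $O_2$ with $\delta$ in place of $\epsilon$ and with $\dot y=-y$, so the node obtained when $\delta<0$ is stable rather than unstable.

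First I would dispose of the connection. The hyperbolic separatrices of $O_1$ and $O_2$ are joined by the two regular transition maps $f_\nu,g_\nu$ of \eqref{eq:01}, and by construction $\lambda=g_\nu(0)$ measures the displacement of these separatrices; thus a connection along what was the hyperbolic-separatrix side persists exactly when $\lambda=0$, and is broken when $\lambda\ne 0$. When $\epsilon=0$ (resp. $\delta=0$) the point $O_1$ (resp. $O_2$) remains a saddle-node and it still possesses a hyperbolic separatrix, so the dichotomy $\lambda=0$ versus $\lambda\ne 0$ continues to distinguish "connected" from "not connected". When however $\epsilon>0$ the neighborhood of $O_1$ contains no singular point, and likewise for $\delta>0$ at $O_2$; in that situation the would-be connection has no endpoint to attach to on that side and the parameter $\lambda$ no longer produces a topological distinction — this is why in items (7), (10) and (11) the value of $\lambda$ is not recorded. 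It is also why the two sub-cases $\epsilon=0,\delta>0$ and $\epsilon>0,\delta=0$ collapse into the single item (10): in both one has a lone saddle-node (at $O_1$ or at $O_2$, respectively) and nothing else up to topological equivalence.

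Combining these two independent analyses and intersecting with $\{\epsilon\le 0\}\cup\{\delta\le 0\}$, I would tabulate: $(\epsilon,\delta)=(0,0)$ gives two saddle-nodes, connected iff $\lambda=0$ — items (1),(2); $(\epsilon,\delta)=(-,-)$ gives a saddle and an unstable node from $O_1$ together with a saddle and a stable node from $O_2$, connected iff $\lambda=0$ — items (3),(4); $(\epsilon,\delta)=(-,0)$ gives saddle $+$ unstable node from $O_1$ and a saddle-node from $O_2$ — wait, re-reading items (5),(6): the node from $O_1$ with $\epsilon<0$ is recorded as \emph{stable}, which fixes the orientation convention ($\dot y=y$ at $O_1$ meaning the node there is the stable one after all); with that convention $(\epsilon,\delta)=(-,0)$ yields saddle, stable node and a saddle-node, connected iff $\lambda=0$ — items (5),(6); symmetrically $(\epsilon,\delta)=(0,-)$ yields saddle, unstable node and a saddle-node — items (8),(9); $(\epsilon,\delta)=(-,+)$ yields a saddle and a stable node with $O_2$ empty — item (7); $(\epsilon,\delta)=(0,+)$ or $(+,0)$ yields a single saddle-node — item (10); $(\epsilon,\delta)=(+,-)$ yields a saddle and an unstable node with $O_1$ empty — item (11). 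To finish I would check that within each item the phase portrait is unique up to topological equivalence — this is immediate since hyperbolic saddles, hyperbolic nodes of a given stability, and multiplicity-$2$ saddle-nodes are each topologically rigid, and the presence/absence of the finitely many connections is already accounted for — and that distinct items are pairwise non-equivalent, which follows because they differ in the unordered list of local singularity types and/or in the number of separatrix connections, both of which are topological invariants. The only genuinely delicate point, and the one I would treat most carefully, is the bookkeeping at the boundary $\epsilon=0$ or $\delta=0$ where a saddle-node coexists with the hyperbolic pair from the other side: one must verify that no \emph{additional} connection is created or destroyed as $\epsilon$ or $\delta$ crosses $0$ beyond the one monitored by $\lambda$, which is exactly the content of the standing genericity hypothesis that the Jacobian of $\mu\mapsto(\epsilon,\delta,\lambda)$ is nonvanishing, so that the three events are unfolded independently.
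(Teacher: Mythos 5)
Your proposal is correct and is essentially the paper's own argument: the paper's entire proof of Theorem \ref{the:04} is the single remark that it ``immediately follows from the local normal forms near the singularities $O_{1,2}$,'' and your case-by-case tabulation of the zeros of $x^{2}+\epsilon$ and $x^{2}+\delta$ from Theorem \ref{the:01}, combined with the connection parameter $\lambda=g_{\nu}(0)$, is exactly that argument written out. The one wrinkle is the stability of the split-off nodes: the normal forms $\dot y=y$ at $O_{1}$ and $\dot y=-y$ at $O_{2}$ actually yield an \emph{unstable} node for $\epsilon<0$ and a \emph{stable} node for $\delta<0$ (the node inherits the repelling, resp.\ attracting, character of the nodal sector), which is the opposite of the labels in items (5)--(9) of the statement; you noticed this tension and deferred to the statement's convention, which is a reasonable reading, but the discrepancy sits in the paper's labelling rather than in your method.
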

The proof immediately follows from the local normal forms near the
singularities $O_{1,2}$.

\section{Admissible change of coordinates}

A {\it $k$-admissible change of coordinates} is a local family of
\linebreak$C^k$-diffeomorphisms
\[
\Phi_\mu(x,y)=(\xi_\mu(x,y),\zeta_\mu(x,y))
\]
such that
\begin{itemize}
\item[(1)] $\Phi_\mu$ is defined in a connected neighborhood
$U\subset\mathbb R^2$ of the origin;

\item[(2)] $\Phi_\mu$ preserve the axes;

\item[(3)] $\Phi_\mu$ preserve the normal form obtained in Theorem
\ref{the:01};

\item[(4)] the map $(x,y,\mu)\mapsto \Phi_\mu(x,y)$ is of class
$C^k$.
\end{itemize}

Let $\Phi_{\nu,1}$ be a $k$-admissible change of coordinates
defined in a neighborhood of the saddle-node $O_1$. Assume that
the transversal sections $\Gamma^\mp_1$ belong both to the domain
and to the image of $\Phi_{\nu,1}$. We have that $\Phi_{\nu,1}$
generates two local $C^k$ diffeomorphisms, $\varphi_{\nu,1}(y)$,
$\psi_{\nu,1}(y)$, such that if $\tilde{\Delta}_{\nu,1}$ is the
transition map from $\Gamma^-_1$ to $\Gamma^+_1$ in the
coordinates $(\xi_{\nu,1},\zeta_{\nu,1})$, which are defined for
$\epsilon
>0$, then
\begin{equation}
\label{eq:04} \tilde{\Delta}_{\nu,1}\circ
\varphi_{\nu,1}=\psi_{\nu,1}\circ \Delta_{\nu,1},
\end{equation}
where $\Delta_{\nu,1}$ is the transition map from $\Gamma^-_1$ to
$\Gamma^+_1$ in the coordinates $(x,y)$. More precisely,
$\varphi_{\nu,1}$ represents the transition function from
$\Gamma^-_1=\{(x,y):x=-1,|y|\leq 1\}$ to the section
$\Gamma^-_1=\{(\xi_{\nu,1},\zeta_{\nu,1}):\xi_{\nu,1}=-1,|\zeta_{\nu,1}|\leq
1\}$.

In the same way, $\psi_{\nu,1}$ represents the transition function
from $\Gamma^+_1=\{(x,y):x=1,|y|\leq 1\}$ to the section
$\Gamma^+_1=\{(\xi_{\nu,1},\zeta_{\nu,1}):\xi_{\nu,1}=1,|\zeta_{\nu,1}|\leq
1\}$.

We call the $C^k$ local family of diffeomorphism
$\varphi_{\nu,1}$, obtained above, the {\it entrance family
associated to the saddle-node $O_1$} and we call {\it exit family
associated to the saddle-node $O_1$}, the other $C^k$ local family
of diffeomorphisms $\psi_{\nu,1}$, obtained above.

In the same way, we denote by $\varphi_{\nu,2}$ the {\it entrance
family associated to the saddle-node $O_2$} and by $\psi_{\nu,2}$
the {\it exit family associated to the saddle-node $O_2$}, defined
by a $k$-admissible change of coordinates $\Phi_{\nu,2}$  in a
neighborhood of saddle-node $O_2$.

\begin{proposition}
\label{pro:01} Let $\varphi_{\nu,1}$, $\psi_{\nu,2}$, be $C^k$
local increasing families of diffeomorphisms such that
$\varphi_{\nu,1}(0)=\psi_{\nu,2}(0)=0$. Then there exist
$k$-admissible changes of coordinates $\Phi_{\nu,i}$, $i=1,2$,
such that the entrance family associated to saddle-node $O_1$ and
the exit family associated to saddle-node $O_2$ are
$\varphi_{\nu,1}$, and $\psi_{\nu,2}$, respectively.
\end{proposition}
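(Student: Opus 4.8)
The plan is to construct $\Phi_{\nu,1}$ and $\Phi_{\nu,2}$ separately, by explicit formulas, exploiting the skew‑product structure of the normal forms \eqref{eq:02}--\eqref{eq:03}. The basic remark is that a change of coordinates keeping the $x$‑coordinate fixed, $(x,y)\mapsto(x,\zeta)$, preserves the normal form \eqref{eq:02} precisely when $\dot\zeta=\zeta$ along the flow; by the method of characteristics (the characteristic field being the normal‑form vector field itself) such a $\zeta$ is uniquely determined by its restriction to a transversal section, and any prescribed restriction extends to one. Now, when $\xi_{\nu,1}=x$ the image section $\{\xi_{\nu,1}=-1\}$ coincides with $\Gamma^-_1=\{x=-1\}$ as a subset of the plane, so the transition along the flow between them is the identity and the entrance family of $\Phi_{\nu,1}$ is simply $y\mapsto\zeta_{\nu,1}(-1,y)$; likewise, the exit family of $\Phi_{\nu,2}$ is $y\mapsto\zeta_{\nu,2}(1,y)$. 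Hence it is enough to choose these restrictions equal to $\varphi_{\nu,1}$ and $\psi_{\nu,2}$ respectively — the other two generated families $\psi_{\nu,1}$, $\varphi_{\nu,2}$ are then whatever \eqref{eq:04} makes them, which is irrelevant since the statement does not prescribe them.

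Concretely, for $O_1$ I would take $\xi_{\nu,1}=x$ and
\[
\zeta_{\nu,1}(x,y)=e^{\tau_1(x)}\,\varphi_{\nu,1}\!\left(y\,e^{-\tau_1(x)}\right),\qquad
\tau_1(x)=\int_{-1}^{x}\frac{1+a_1(\mu)s}{s^{2}+\epsilon}\,ds,
\]
where $\tau_1(x)$ is the transit time of \eqref{eq:02} from $\Gamma^-_1$ to the fibre over $x$, so that $\tau_1'(x)\dot x\equiv 1$ along orbits. Using this and $\dot y=y$, the function $y\,e^{-\tau_1(x)}$ is a first integral of \eqref{eq:02}, whence a one‑line computation gives $\dot\zeta_{\nu,1}=\zeta_{\nu,1}$, while trivially $\dot\xi_{\nu,1}=(\xi_{\nu,1}^{2}+\epsilon)(1+a_1(\mu)\xi_{\nu,1})^{-1}$; thus $\Phi_{\nu,1}=(\xi_{\nu,1},\zeta_{\nu,1})$ preserves the normal form. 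Since $\varphi_{\nu,1}$ is increasing with $\varphi_{\nu,1}(0)=0$, one has $\{\zeta_{\nu,1}=0\}=\{y=0\}$, so the axes are preserved; $\Phi_{\nu,1}$ is fibre‑preserving and fibrewise strictly increasing (its Jacobian is $\varphi_{\nu,1}'(y\,e^{-\tau_1(x)})>0$), hence a diffeomorphism onto its image; and it is $C^k$ jointly in $(x,y,\mu)$ on the flow‑saturated region where $\tau_1$ is smooth, because $a_1,\epsilon$ are smooth in $\mu$ and $\varphi_{\nu,1}$ is $C^k$. By construction its entrance family is $\varphi_{\nu,1}$.

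For $O_2$ I would run the mirror argument on $\Gamma^+_2=\{x=1\}$, using that the $y$‑flow of \eqref{eq:03} is contracting: set $\xi_{\nu,2}=x$ and $\zeta_{\nu,2}(x,y)=e^{-\sigma_2(x)}\psi_{\nu,2}\!\left(y\,e^{\sigma_2(x)}\right)$ with $\sigma_2(x)=\int_{1}^{x}(1+a_2(\mu)s)(s^{2}+\delta)^{-1}\,ds$. Here $y\,e^{\sigma_2(x)}$ is a first integral and $\dot\zeta_{\nu,2}=-\zeta_{\nu,2}$, and the same verifications show $\Phi_{\nu,2}$ is a $k$‑admissible change of coordinates whose exit family is $y\mapsto\zeta_{\nu,2}(1,y)=\psi_{\nu,2}(y)$. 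Together with the previous paragraph this proves the proposition.

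The step needing the most care — and, I expect, the real obstacle — is meeting clauses (1) and (4) in the definition of $k$‑admissibility, i.e.\ uniformity of the construction in the parameter. The formulas above are manifestly $C^k$ for $\epsilon>0$ and $\delta>0$, but $\tau_1$ and $\sigma_2$ develop logarithmic singularities as $\epsilon$, $\delta$ decrease to $0$ (and as $x$ reaches the singular points bifurcating from $O_i$), and the flow‑saturated domain on which the construction naturally lives does not cover a full neighborhood of $O_i$ when $\epsilon\le0$ or $\delta\le0$. One should therefore either restrict to that flow‑saturated domain — which is anyway the only portion of the neighborhood entering the transition maps $\Delta_{\nu,i}$, $f_\nu$, $g_\nu$ — or prescribe $\varphi_{\nu,1}$ (resp.\ $\psi_{\nu,2}$) on a transversal placed arbitrarily close to the hyperbolic separatrix of $O_i$, and then verify that the resulting $\Phi_{\nu,i}$ still extends to a $C^k$ family of diffeomorphisms of a $\nu$‑independent connected neighborhood of $O_i$, invoking the elementary phase‑portrait description of Theorem \ref{the:04} for $\epsilon\le0$, $\delta\le0$. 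This bookkeeping, not the algebraic identities, is where the difficulty lies.
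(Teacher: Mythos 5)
The paper does not actually prove this proposition: it refers the reader to \cite{GR}, page 53. Your flow--transport construction is the standard one (and, up to presentation, the one the cited reference uses): prescribe the conjugating diffeomorphism on the entrance section $\Gamma^-_1$ (resp.\ exit section $\Gamma^+_2$) and extend it along orbits so that $\dot\zeta=\pm\zeta$ is preserved. Your identities are correct: $y\,e^{-\tau_1(x)}$ is a first integral, $\dot\zeta_{\nu,1}=\zeta_{\nu,1}$, $\zeta_{\nu,1}(-1,y)=\varphi_{\nu,1}(y)$, the axes and the normal form are preserved, and for $\epsilon>0$ (resp.\ $\delta>0$) the formula is defined on the whole box between the two sections. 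It is also right that only the entrance family at $O_1$ and the exit family at $O_2$ are prescribed, so $\psi_{\nu,1}$ and $\varphi_{\nu,2}$ may be left free.

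However, the issue you flag in your last paragraph is not bookkeeping; it is the entire content of the proposition, and you do not close it. A $k$-admissible change of coordinates must, by conditions (1) and (4) of the definition, be a single $C^k$ family of diffeomorphisms on a connected, $\nu$-independent neighborhood $U$ of the saddle-node, in particular for $\epsilon\le 0$ (resp.\ $\delta\le 0$) and across $\epsilon=0$. Your $\tau_1$ blows up as $x$ approaches the singular locus $x^2+\epsilon=0$, so for $\epsilon\le 0$ the formula only lives on the saturation of $\Gamma^-_1$, which misses the nodal sector; and even for $\epsilon>0$ one must check that all partial derivatives up to order $k$ of $e^{\tau_1(x)}\varphi_{\nu,1}(y\,e^{-\tau_1(x)})$ in $(x,y,\mu)$ stay bounded uniformly as $\epsilon\downarrow 0$ (the cancellations you would need, e.g.\ $\tau_1'(x)e^{-\tau_1(x)}$ bounded, do occur, but this must be proved, typically at the cost of the finite loss of smoothness mentioned after Theorem \ref{the:01}). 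Matching your entrance-side extension with a backward extension from $\Gamma^+_1$ across the strong separatrix $\{x=0\}$ at $\epsilon=0$ is exactly the finitely smooth normalization argument that \cite{GR} supplies. So: correct strategy and correct algebra, but the proof is incomplete precisely at the step that makes the statement nontrivial; as written it should either carry out that extension or, like the paper, explicitly delegate it to \cite{GR}.
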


The proof of this proposition can be found in \cite{GR} page $53$.

\begin{theorem}
There exist k-admissible changes of coordinates, such that in
these coordinates the map $g_\nu$ can be written in the form
\begin{equation}
\label{eq:08} g_{\nu}(y)=-y+\lambda.
\end{equation}
\end{theorem}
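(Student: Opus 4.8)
The plan is to use Proposition \ref{pro:01} together with the explicit form \eqref{eq:01} of the regular transition map $g_\nu$ and a suitable choice of the exit family associated to $O_2$ and the entrance family associated to $O_1$, so that the normalized version of $g_\nu$ becomes the affine map $y\mapsto -y+\lambda$. First I would recall that $g_\nu\colon\Gamma^+_2\to\Gamma^-_1$ is a $C^k$ local diffeomorphism with $g_0(0)=0$, $g_\nu'(0)<0$, and $\lambda=g_\nu(0)$, by the construction of the parameter $\lambda$ in Section \ref{sec:02}. After a $k$-admissible change of coordinates $\Phi_{\nu,2}$ near $O_2$ and $\Phi_{\nu,1}$ near $O_1$, the transition map $g_\nu$ is replaced by $\tilde g_\nu=\varphi_{\nu,1}\circ g_\nu\circ\psi_{\nu,2}^{-1}$, where $\psi_{\nu,2}$ is the exit family associated to $O_2$ and $\varphi_{\nu,1}$ is the entrance family associated to $O_1$; this is just the change-of-coordinates rule \eqref{eq:04} read on the connection rather than inside the saddle-node neighborhood.

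The key step is then to exhibit local increasing families of diffeomorphisms $\psi_{\nu,2}$ and $\varphi_{\nu,1}$, both fixing $0$, such that $\varphi_{\nu,1}\circ g_\nu\circ\psi_{\nu,2}^{-1}(y)=-y+\lambda$. I would do this in two moves. First, absorb the orientation-reversing nonlinearity: since $g_\nu'(0)<0$, the map $h_\nu(y):=\lambda-g_\nu(y)$ is an increasing $C^k$ local diffeomorphism with $h_\nu(0)=0$; choosing $\psi_{\nu,2}:=h_\nu$ as the exit family for $O_2$ (legitimate by Proposition \ref{pro:01}, which lets us prescribe the exit family of $O_2$ freely among increasing families fixing the origin) turns $g_\nu\circ\psi_{\nu,2}^{-1}$ into the affine map $y\mapsto -y+\lambda$ already, so one can simply take $\varphi_{\nu,1}=\mathrm{id}$. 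Alternatively, if it is more convenient to keep $\psi_{\nu,2}$ close to the identity, one splits the linearization and the higher-order part between $\psi_{\nu,2}$ and $\varphi_{\nu,1}$; either way the existence of the required admissible charts is guaranteed by Proposition \ref{pro:01}. One must only check that the resulting $\psi_{\nu,2}$ (and $\varphi_{\nu,1}$) depend on $\nu$ in a $C^k$ fashion and are increasing for $\nu$ small, which follows from $g_\nu'(0)<0$ and the $C^k$ dependence of $g_\nu$ on $\nu$.

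The main obstacle is a compatibility one rather than an analytic one: Proposition \ref{pro:01} allows us to prescribe simultaneously only the entrance family of $O_1$ and the exit family of $O_2$ (not, say, both families at a single saddle-node), so the argument has to be arranged so that normalizing $g_\nu$ uses exactly this pair and no more. In particular the other two transition data, namely $\Delta_{\nu,1}$, $\Delta_{\nu,2}$ in their already-integrated form \eqref{eq:09}, and the map $f_\nu$ of \eqref{eq:01}, must not be disturbed; since the charts $\Phi_{\nu,1}$, $\Phi_{\nu,2}$ are required to preserve the normal forms near $O_1$, $O_2$ (condition (3) in the definition of admissible change), formulas \eqref{eq:09} persist, and $f_\nu$ is simply re-expressed in the new charts, which is harmless. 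So the only real content is the explicit choice $\psi_{\nu,2}=\lambda-g_\nu$ (equivalently the pair realizing it), the verification that it is admissible via Proposition \ref{pro:01}, and bookkeeping that after this change $g_\nu$ has the stated form \eqref{eq:08}.
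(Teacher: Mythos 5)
Your proposal is correct and follows essentially the same route as the paper: both invoke Proposition \ref{pro:01} to prescribe the entrance family at $O_1$ and the exit family at $O_2$, and both make the identical choice $\varphi_{\nu,1}=\mathrm{id}$ and $\psi_{\nu,2}(y)=-g_\nu(y)+g_\nu(0)=\lambda-g_\nu(y)$, which is increasing and fixes the origin because $g_\nu'(0)<0$ and $g_\nu(0)=\lambda$. The extra remarks on $C^k$ dependence on $\nu$ and on not disturbing $\Delta_{\nu,1}$, $\Delta_{\nu,2}$, $f_\nu$ are sound bookkeeping that the paper leaves implicit.
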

\begin{proof}
Let $\varphi_{\nu,1}$ and $\psi_{\nu,2}$ be families of
diffeomorphisms satisfying the hypothesis of Proposition
\ref{pro:01}. Then, there exist $k$-admissible changes of
coordinates $\Phi_{\nu,1}(x,y)= (\xi_{\nu,1},\zeta_{\nu,1})$ and
$\Phi_{\nu,2}(x,y)= (\xi_{\nu,2},\zeta_{\nu,2})$ such that
$\varphi_{\nu,1}$ is the entrance family associated to saddle-node
$O_1$ and $\psi_{\nu,2}$ is the exit family associated to
saddle-node $O_2$. In admissible coordinates the transition map
$g_\nu :\Gamma^+_2\rightarrow \Gamma^-_1$, defined in
\eqref{eq:01}, is given by
\[
\tilde{g}_\nu=\varphi_{\nu,1}\circ g_\nu \circ
(\psi_{\nu,2})^{-1},
\]
i.e.,
\[
\tilde{g}_\nu\circ \psi_{\nu,2}=\varphi_{\nu,1}\circ g_\nu.
\]
We want to obtain that $\tilde{g}_\nu
(\zeta_{\nu,2})=-\zeta_{\nu,2}+\lambda$, where $g_\nu(0)=\lambda$.
Therefore, by the expression above, we have to choose
$\varphi_{\nu,1}$ and $\psi_{\nu,2}$ such that the equation
\[
-\psi_{\nu,2}(y)+g_\nu(0) = \varphi_{\nu,1}(g_{\nu}(y))
\]
has a solution. Hence, it is sufficient to take
$\varphi_{\nu,1}(y)=y$ and $\psi_{\nu,2}(y)=-g_\nu(y)+g_\nu(0)$.

\end{proof}

\section{Limit cycles bifurcating from the lips} \label{sec:01}

In the non-trivial part of the bifurcation diagram, where there
are no singular points, different strata of the diagram correspond
to different numbers and position of limit cycles. From this point
on, it is helpful to have in mind Figure~\ref{figura1} and the
notation in Section \ref{sec:02}.

The Poincar\'e map $\Delta_\nu:\Gamma^+_1\rightarrow\Gamma^+_1$
for each $\nu$ is the composition,
\[
\Delta_\nu=f_\nu^{-1}\circ \Delta_{\nu,2}^{-1}\circ
g_\nu^{-1}\circ \Delta_{\nu,1}^{-1},
\]
considered as the composition of one-dimensional maps depending on
the parameters. Note that in the M\"obius band the limit cycles
that bifurcate from the lips correspond to the fixed points of
periods $1$ and $2$ of $\Delta_\nu$. Therefore, the equations that
determine  limit cycles are
\[
\Delta_\nu(y)=y\;\mbox{and}\;\Delta_\nu^2(y)=y.
\]
In fact, the solutions of $\Delta_\nu(y)=y$ are contained in the
set of solutions of $\Delta_\nu^2(y)=y$. The equations of first
and second return fixed points can be rewritten in the form
\begin{equation}
\label{eq:07}
\begin{array}{rcl}
\Delta_{\nu,2}^{-1}\circ g_\nu^{-1}\circ
\Delta_{\nu,1}^{-1}(y) & = & f_\nu(y), \\
f_\nu^{-1}\circ\Delta_{\nu,2}^{-1}\circ g_\nu^{-1}\circ
\Delta_{\nu,1}^{-1}(y) & = & \Delta_{\nu,1}\circ g_\nu\circ
\Delta_{\nu,2}\circ f_\nu(y).
\end{array}
\end{equation}
Introducing the new parmeters
\[
p=\frac{C_1(\epsilon)}{C_2(\delta)} \;\,\, \mbox{and}\;\,\,
q=\frac{\lambda}{C_2(\delta)},
\]
by expressions \eqref{eq:09}, \eqref{eq:19} and \eqref{eq:08},
\eqref{eq:07} becomes

\begin{eqnarray}
-py+q & = & f(y)+r_1(y,\nu),\label{eq:10} \\
-py+q & = &
f\left(-\frac{1}{p}f(y)+\frac{q}{p}\right)+r_2(y,\nu),\label{eq:11}
\end{eqnarray}
where
\[
f(y)=f_0(y),\,\, r_1(y,\nu)=f_\nu(y)-f_0(y)
\]
and
\[
r_2(y,\nu)=f_\nu\left(-\frac{1}{p}f_\nu(y)+\frac{q}{p}\right)-f_0\left(-\frac{1}{p}f_0(y)+\frac{q}{p}\right).
\]
Note that $r_1(y,\nu)\rightarrow 0$ and $r_2(y,\nu)\rightarrow 0$
in $C^k-norm$ on $[-1,1]$ when $\nu\rightarrow 0$, since $f_\nu$
is $C^k$-smooth.

As in the previous section, the transversal section $\Gamma^+_1$
is simply the interval $[-1,1]$. Thus we need to investigate the
equations \eqref{eq:10} and \eqref{eq:11} for $y\in [-1,1]$. When
studying the bifurcations of limit cycles, we in fact deal with
bifurcations of roots of \eqref{eq:11} on $[-1,1]$, and their
variations in function of the parameters. Isolated roots of
equation \eqref{eq:11} on the segment $[-1,1]$ are in one-to-one
correspondence with limit cycles intersecting $\Gamma^+_1$ that
appear after perturbation of the lips. Moreover, simple roots
correspond to hyperbolic cycles, double roots correspond to
semistable cycles, etc.

There are two possible types of bifurcations:
\begin{itemize}
\item[(1)] splitting of multiple roots, and \item[(2)] escaping of
a root through the boundary points of $\Gamma^+_1$.
\end{itemize}

Therefore the bifurcation surface of equation \eqref{eq:11}, i.e.
the surface in the parameter space where the number of roots
change, is the union of four surfaces, $\Sigma_1$, $\Sigma_2$ and
$\Sigma_\pm$. On $\Sigma_1$ we have roots of \eqref{eq:10} that
are multiple roots of \eqref{eq:11} and on $\Sigma_2$ we have
multiple roots of \eqref{eq:11}. Now, on $\Sigma_+$ (resp.
$\Sigma_-$) there is at least one root equal to $1$ (resp. $-1$).

In terms of bifurcations in the original system, the surface
$\Sigma_1\cup \Sigma_2 $ corresponds to the splitting of a
multiple limit cycles, while the union $\Sigma_+\cup \Sigma_-$
correspond to cycles escaping from the domain where the system is
considered.

Fix $P>\max_{y\in [-1,1]}f'(y)$ and consider the subset in
$\mathbb R^3_\nu$ for which $0<p<P$. In this domain the equations
\eqref{eq:10} and \eqref{eq:11} can be regarded as small
perturbations of the equations
\begin{eqnarray}
-py+q & = & f(y),\label{eq:12} \\
-py+q & = &
f\left(-\frac{1}{p}f(y)+\frac{q}{p}\right).\label{eq:13}
\end{eqnarray}

%

\section{The second reparametrization}
\label{sec:03} Let $V\subset \mathbb R^3_\nu$ be a small
neighborhood of the origin and, denote by $V^+$ the intersection
$V\cap \{\epsilon>0, \delta>0\}$. Without loss of generality we
may assume that $V^+$ is a small cube with the edges parallel to
the coordinate axes.

Consider the reparametrization map $\Phi:V^+\rightarrow \mathbb
R^{3+}_{(\delta, p, q)}=\mathbb R^{3}_{(\delta, p, q)}\cap\{p>0\}$
defined by the formula
\[
(\delta,\epsilon,\lambda)\mapsto \Phi
(\delta,\epsilon,\lambda)=(\delta,p,q)=\left(\delta,\frac{C_1(\epsilon)}{C_2(\delta)},\frac{\lambda}{C_2(\delta)}\right).
\]

\begin{lemma}
\label{lem:01} The map $\Phi$ defined above has the following
properties:
\begin{itemize}
\item[(a)] The domain $\Phi(V^+)$ is unbounded. The half plane
$poq^+=$ \linebreak$\mathbb R^{3+}_{(\delta,p,q)}\cap
\{\delta=0\}$ belongs to the boundary of $\Phi(V^+)$.

\item[(b)] The inverse map $\Phi^{-1}$ is defined on $\Phi(V^+)$
and extends by continuity to $poq^+$. The extended map is
continuously differentiable in $\delta$ at $\delta =0$ and takes
the half plane $poq^+$ into the point $\nu=0$.

\item[(c)] For any compact set $D\subset \mathbb R^{2+}_{(p,q)}$
there exist $\delta_D>0$ such that the cylinder
$Z_D=(0,\delta_D)\times D$ belongs to $\Phi(V^+)$.

\item[(d)] For any point $A\in \mathbb R^{2+}_{(p,q)}$ the
extended map $\Phi^{-1}$ takes the semi-interval
$[0,\delta_A)\times A$ into a part of a curve tangent at $\nu=0$
to the line $\delta =\epsilon$, $\lambda=0$.
\end{itemize}
\end{lemma}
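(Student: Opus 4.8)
The plan is to exploit that, by \eqref{eq:19}, both $C_i$ are built from the single function $h(t)=\tfrac{2}{\sqrt t}\arctan\tfrac1{\sqrt t}$ via $C_1(\epsilon)=e^{-h(\epsilon)}$, $C_2(\delta)=e^{-h(\delta)}$. First I would record the elementary properties of $h$: the substitution $u=t^{-1/2}$ turns $h$ into $2u\arctan u$, a strictly increasing bijection of $(0,\infty)$ onto $(0,\infty)$, so $h\colon(0,\infty)\to(0,\infty)$ is a strictly decreasing $C^\infty$ diffeomorphism and $C_1,C_2\colon(0,\infty)\to(0,1)$ are strictly increasing $C^\infty$ diffeomorphisms with nonvanishing derivative. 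Hence $\Phi$ is injective, with inverse given explicitly on $\Phi(V^+)$ by: the $\delta$-coordinate is $\delta$, the $\epsilon$-coordinate is $\epsilon=C_1^{-1}\!\bigl(pC_2(\delta)\bigr)=h^{-1}\!\bigl(h(\delta)-\ln p\bigr)$, and the $\lambda$-coordinate is $\lambda=qC_2(\delta)$. Being a composition of $C^\infty$ maps with nonvanishing derivatives, $\Phi^{-1}$ is $C^\infty$ on $\Phi(V^+)$; this is the first half of (b).

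Next I would analyse $\Phi^{-1}$ near $\delta=0$. From $\arctan u=\tfrac\pi2-u^{-1}+O(u^{-3})$ one gets $h(t)=\pi t^{-1/2}-2+O(t)$ as $t\to0^+$, hence $h^{-1}(s)=\pi^2(s+2)^{-2}\bigl(1+O(s^{-3})\bigr)$ as $s\to+\infty$. Substituting $s=h(\delta)=\pi\delta^{-1/2}-2+O(\delta)$ into the formula for $\epsilon$ yields $\epsilon(\delta,p)=\delta+\tfrac{2\ln p}{\pi}\,\delta^{3/2}+o(\delta^{3/2})$; in particular $\epsilon(\delta,p)\to0$ and $\epsilon(\delta,p)/\delta\to1$, and differentiating the identity $h(\epsilon)=h(\delta)-\ln p$ gives $\partial_\delta\epsilon=h'(\delta)/h'\!\bigl(\epsilon(\delta,p)\bigr)\to1$ because $h'(t)\sim-\tfrac\pi2 t^{-3/2}$ and $\epsilon\sim\delta$. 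On the other hand $C_2(\delta)=e^{-\pi/\sqrt\delta+2+O(\delta)}$ and all its $\delta$-derivatives tend to $0$ as $\delta\to0^+$, so $C_2$ extends to a $C^\infty$ function on $[0,\infty)$ flat at the origin, and $\lambda(\delta,q)=qC_2(\delta)$ extends with $\lambda(0,q)=\partial_\delta\lambda(0,q)=0$. Defining $\Phi^{-1}(0,p,q):=\nu=0$ thus yields a continuous extension to $\Phi(V^+)\cup poq^+$ that is continuously differentiable in $\delta$ at $\delta=0$, with $\partial_\delta\Phi^{-1}\big|_{\delta=0}=(1,1,0)$ in the coordinates $(\epsilon,\delta,\lambda)$; this finishes (b). Assertion (d) then follows at once: for $A=(p_*,q_*)$ the curve $\delta\mapsto\Phi^{-1}(\delta,p_*,q_*)$, $\delta\in[0,\delta_A)$, passes through $\nu=0$ at $\delta=0$ with tangent vector $(1,1,0)$, i.e.\ it is tangent there to the line $\{\epsilon=\delta,\ \lambda=0\}$.

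For (a) and (c) it remains to locate $\Phi^{-1}$ inside $V^+$. Fix $\epsilon_0,\delta_0,\lambda_0>0$ so small that the box $B_0=(0,\epsilon_0)\times(0,\delta_0)\times(-\lambda_0,\lambda_0)$ (in the $(\epsilon,\delta,\lambda)$ coordinates) is contained in $V^+$. Given a compact set $D\subset\mathbb R^{2+}_{(p,q)}$, put $P=\max_{(p,q)\in D}p$ and $Q=\max_{(p,q)\in D}|q|$; since $h(\delta)\to+\infty$ and $C_2(\delta)\to0$ as $\delta\to0^+$, choose $\delta_D\le\delta_0$ with $h(\delta)>h(\epsilon_0)+\ln P$ and $Q\,C_2(\delta)<\lambda_0$ for all $\delta\in(0,\delta_D)$. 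Then for $(\delta,p,q)\in(0,\delta_D)\times D$ we have $h(\delta)-\ln p>h(\epsilon_0)>0$, hence $\epsilon=h^{-1}(h(\delta)-\ln p)\in(0,\epsilon_0)$, and $|\lambda|=|q|C_2(\delta)<\lambda_0$, so $\Phi^{-1}(\delta,p,q)\in B_0\subset V^+$; thus $Z_D=(0,\delta_D)\times D\subset\Phi(V^+)$, proving (c). For (a): $B_0$ contains the points $(\epsilon_1,\delta,0)$ with $\epsilon_1\in(0,\epsilon_0)$ fixed and $\delta\to0^+$, and $p=C_1(\epsilon_1)/C_2(\delta)\to+\infty$ along them, so $\Phi(V^+)$ is unbounded; and applying the previous paragraph with $D=\{(p_*,q_*)\}$ gives $\Phi\bigl(\Phi^{-1}(\delta,p_*,q_*)\bigr)=(\delta,p_*,q_*)\to(0,p_*,q_*)$ as $\delta\to0^+$, whereas no point of $\Phi(V^+)$ has $\delta=0$; hence every point of $poq^+$ lies on $\partial\Phi(V^+)$.

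The one genuinely delicate step is the $C^1$-in-$\delta$ regularity of the $\epsilon$-component of $\Phi^{-1}$ at $\delta=0$: this requires tracking the constant term $-2$ in $h(t)=\pi t^{-1/2}-2+O(t)$ through the inversion of $h$ and through the quotient $h'(\delta)/h'(\epsilon(\delta))$, and checking that the remainder $O(t)$ and its derivative do not spoil the limit. Everything else is routine bookkeeping with the monotone, exponentially flat functions $C_1$ and $C_2$.
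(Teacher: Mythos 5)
Your proof is correct. Note that the paper does not actually prove this lemma: it simply refers to Kotova--Stanzo \cite{KS}, p.~186, so you have supplied the self-contained argument that the reference contains. Your computations check out: $h(t)=\frac{2}{\sqrt t}\arctan\frac{1}{\sqrt t}$ is indeed a decreasing diffeomorphism of $(0,\infty)$ onto itself, the explicit inverse $\epsilon=h^{-1}(h(\delta)-\ln p)$, $\lambda=qC_2(\delta)$ is the right starting point, and the expansions $h(t)=\pi t^{-1/2}-2+O(t)$ and $h'(t)\sim-\frac{\pi}{2}t^{-3/2}$ give exactly $\epsilon=\delta+\frac{2\ln p}{\pi}\delta^{3/2}+o(\delta^{3/2})$ and $\partial_\delta\epsilon=h'(\delta)/h'(\epsilon(\delta,p))\to1$. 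You handle the one genuinely delicate point correctly: the $C^1$ statement is obtained by differentiating the exact implicit relation $h(\epsilon)=h(\delta)-\ln p$ rather than by illegitimately differentiating the asymptotic expansion, and the exponential flatness of $C_2$ at $0$ disposes of the $\lambda$-component. Parts (a), (c) and (d) then follow by the routine monotonicity and compactness arguments you give. The only cosmetic remark is that the tangency in (d) is most cleanly stated as you do, via the limit tangent vector $(1,1,0)$ in the $(\epsilon,\delta,\lambda)$ coordinates of the curve $\delta\mapsto\Phi^{-1}(\delta,p_*,q_*)$.
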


The proof of this lema can be found in \cite{KS} page 186.

\begin{corollary}
{For} any compact set $D\subset \mathbb R^{2+}_{(p,q)}$, the map
$\Phi^{-1}$ takes the cylinder $Z_D$ into a narrow horn with
vertex at $\nu=0$, tangent to the line $\delta =\epsilon$,
$\lambda=0$; the projection of this horn to the plane $\delta
o\epsilon$ has an opening of the order $~\delta^{\frac{3}{2}}$,
and the projection to the plane $\delta o\lambda$ has an
exponentially small opening of the order
$~\exp\left(-\frac{\pi}{\sqrt{\delta}}\right)$.
\end{corollary}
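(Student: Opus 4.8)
The plan is to deduce the corollary directly from Lemma \ref{lem:01}, treating it as a quantitative repackaging of parts (c) and (d). First I would fix a compact set $D\subset \mathbb R^{2+}_{(p,q)}$ and invoke part (c) to obtain $\delta_D>0$ with $Z_D=(0,\delta_D)\times D\subset \Phi(V^+)$, so that $\Phi^{-1}(Z_D)$ is well defined; by part (b) it extends continuously to $\{0\}\times D$, all of which is carried to the single point $\nu=0$, which will be the vertex of the horn. Then, applying part (d) pointwise over $A\in D$, each segment $[0,\delta_A)\times A$ is mapped to a curve through $\nu=0$ tangent to the line $\ell=\{\delta=\epsilon,\ \lambda=0\}$; the union of these curves over $A\in D$, restricted to $\delta<\delta_D$, is exactly $\Phi^{-1}(Z_D)$, and since every one of them is tangent to $\ell$ at the common point $\nu=0$, the whole set is a ``horn'' with vertex $\nu=0$ and axis $\ell$.

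Next I would extract the two opening estimates by writing $\Phi^{-1}$ in coordinates. From $p=C_1(\epsilon)/C_2(\delta)$ and $q=\lambda/C_2(\delta)$ we get $C_1(\epsilon)=p\,C_2(\delta)$ and $\lambda=q\,C_2(\delta)$. For a point of $Z_D$ with fixed $\delta$ and $(p,q)$ ranging over $D$, the $\lambda$-coordinate of the image ranges over $\{q\,C_2(\delta):(p,q)\in D\}$, an interval of length $(\max q-\min q)\,C_2(\delta)$; by \eqref{eq:19}, $C_2(\delta)=\exp(-\tfrac{2}{\sqrt\delta}\arctan\tfrac{1}{\sqrt\delta})$, and since $\arctan\tfrac{1}{\sqrt\delta}\to \tfrac{\pi}{2}$ as $\delta\to 0$, this is $\exp\!\big(-\tfrac{\pi}{\sqrt\delta}(1+o(1))\big)$, which gives the stated exponentially small opening of order $\exp(-\pi/\sqrt\delta)$ for the projection to the $\delta o\lambda$ plane. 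For the $\epsilon$-direction I would invert $C_1(\epsilon)=p\,C_2(\delta)$: using the same asymptotics, $-\tfrac{2}{\sqrt\epsilon}\arctan\tfrac{1}{\sqrt\epsilon}=\log p-\tfrac{2}{\sqrt\delta}\arctan\tfrac1{\sqrt\delta}$, so $\tfrac{1}{\sqrt\epsilon}=\tfrac{1}{\sqrt\delta}-\tfrac{\log p}{\pi}+o(1)$, whence $\epsilon=\delta\big(1+\tfrac{\sqrt\delta}{\pi}\log p+o(\sqrt\delta)\big)$; letting $p$ range over the compact set (so $\log p$ ranges over a bounded interval) shows the $\epsilon$-spread at fixed $\delta$ is of order $\delta\cdot\sqrt\delta=\delta^{3/2}$, the claimed opening of the projection to $\delta o\epsilon$. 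This computation also re-confirms the tangency to $\delta=\epsilon$, $\lambda=0$ already asserted in (d).

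I expect the main obstacle to be purely expository rather than mathematical: making precise what ``narrow horn'' and ``opening of order $\delta^{3/2}$'' mean and checking uniformity in $(p,q)\in D$. Concretely, one must verify that the $o(1)$ and $o(\sqrt\delta)$ terms above are uniform over the compact set $D$ — this follows because $C_1$ is a fixed smooth function with the known asymptotics and $D$ is compact, so all the expansions hold uniformly — and that $\delta_D$ can be chosen small enough that $\Phi^{-1}$ is genuinely a diffeomorphism onto its image off $\{\delta=0\}$, which is guaranteed by part (b) (the extended $\Phi^{-1}$ is $C^1$ in $\delta$). Once uniformity is in hand, the horn picture and the two order estimates are immediate, and the proof is complete; I would keep the write-up short, citing Lemma \ref{lem:01} for the structural claims and \eqref{eq:19} for the asymptotics of $C_1,C_2$.
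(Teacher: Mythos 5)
Your proof is correct and follows exactly the route the paper intends: the corollary is stated there without proof as an immediate consequence of Lemma \ref{lem:01} and the explicit formulas \eqref{eq:19}, and your derivation of the two openings from $C_1(\epsilon)=p\,C_2(\delta)$ and $\lambda=q\,C_2(\delta)$, using $\arctan(1/\sqrt\delta)=\pi/2-\sqrt\delta+O(\delta^{3/2})$ uniformly over the compact set $D$, is that argument. The only slip is a harmless missing factor of $2$ (the expansion should read $\epsilon=\delta\bigl(1+\tfrac{2\sqrt{\delta}}{\pi}\log p+o(\sqrt{\delta})\bigr)$), which does not affect the stated order $\delta^{3/2}$.
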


We say that the horn $\Phi^{-1}$ correspond to the compact set
$D$.

\begin{corollary}
There exists a compact set $D\subset R^{2+}_{(p,q)}$ such that the
surfaces $\Sigma_1$ and $\Sigma_2$ lie inside the corresponding
horn.
\end{corollary}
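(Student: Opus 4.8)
The goal is to show that there is a compact $D\subset\mathbb R^{2+}_{(p,q)}$ whose corresponding horn $\Phi^{-1}(Z_D)$ contains both bifurcation surfaces $\Sigma_1$ and $\Sigma_2$. The plan is to translate the defining conditions of $\Sigma_1$ and $\Sigma_2$ into the blown-up coordinates $(\delta,p,q)$ and show that, once $\delta$ is small, the relevant $(p,q)$-values stay in a bounded region.

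First I would recall that $\Sigma_1$ is the set of parameters for which equation \eqref{eq:10} has a root on $[-1,1]$ that is also a double root of \eqref{eq:11}, and $\Sigma_2$ the set where \eqref{eq:11} has a multiple root on $[-1,1]$; in the $(\delta,p,q)$ coordinates these are small perturbations of the conditions coming from \eqref{eq:12}--\eqref{eq:13}. The key observation is that the admissible range of $y$ is the fixed compact interval $[-1,1]$ and that $f=f_0$ is a fixed smooth function on it; hence $f$ and $f'$ are bounded on $[-1,1]$, say $|f|\le M$ and $f'<P$. From \eqref{eq:12}, a root $y\in[-1,1]$ forces $q=py+f(y)$, so $|q|\le P\cdot 1+M$; and I already restrict to $0<p<P$ by the choice of $P>\max_{[-1,1]}f'$ made before the reparametrization. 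Thus every point of $\Sigma_1$, expressed in $(p,q)$, lies in the fixed rectangle $R=[0,P]\times[-P-M,\,P+M]$ (closure taken), uniformly in $\delta$ small; the same estimate applied to \eqref{eq:13}, using that $-\tfrac1p f(y)+\tfrac qp$ must again be a legitimate argument in the fixed domain where $f$ is defined, bounds the $(p,q)$-coordinates of $\Sigma_2$ in a possibly larger but still fixed compact rectangle. Taking $D$ to be (a closed rectangle containing) the union of these two rectangles gives a single compact set with $\Sigma_1,\Sigma_2\subset\{(\delta,p,q):(p,q)\in D\}$ for all sufficiently small $\delta>0$.

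Next I would invoke Lemma \ref{lem:01}(c): for this $D$ there is $\delta_D>0$ with the cylinder $Z_D=(0,\delta_D)\times D$ contained in $\Phi(V^+)$. After shrinking the neighbourhood $V$ if necessary so that $\Sigma_1$ and $\Sigma_2$ only meet the region $\{0<\delta<\delta_D\}$ (legitimate since all the bifurcation phenomena accumulate at $\nu=0$, which corresponds to $\delta\to0$), we conclude $\Phi(\Sigma_1\cup\Sigma_2)\subset Z_D$, equivalently $\Sigma_1\cup\Sigma_2\subset\Phi^{-1}(Z_D)$, and by the preceding corollary $\Phi^{-1}(Z_D)$ is precisely the narrow horn corresponding to $D$. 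This is the desired conclusion.

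The main obstacle is the uniformity in $\delta$: one must be sure that the remainder terms $r_1(y,\nu),r_2(y,\nu)$ do not enlarge the $(p,q)$-range in an uncontrolled way as $\nu\to0$. This is handled by the remark after \eqref{eq:11} that $r_1,r_2\to0$ in $C^k$-norm on $[-1,1]$ as $\nu\to0$: for $\delta$ small enough the roots of the perturbed equations \eqref{eq:10}--\eqref{eq:11} lie within, say, distance $1$ of the roots of the unperturbed \eqref{eq:12}--\eqref{eq:13}, so one simply inflates the rectangle $R$ by a fixed amount before fixing $D$. A secondary point to check is that the constraint $0<p<P$ is consistent with staying inside $\Phi(V^+)$; but this was already arranged in Section \ref{sec:01}, so no extra work is needed there.
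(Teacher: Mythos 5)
There is a genuine gap: your candidate set is not a compact subset of $\mathbb R^{2+}_{(p,q)}=\{p>0\}$, because you never bound $p$ away from $0$ on $\Sigma_1\cup\Sigma_2$. You take the rectangle $R=[0,P]\times[-P-M,P+M]$ ``(closure taken)'', but that closure contains the line $p=0$, which lies outside $\mathbb R^{2+}_{(p,q)}$; Lemma \ref{lem:01}(c) then does not apply, and the resulting ``horn'' would not be the narrow horn of the preceding corollary (whose whole point is that $p$ stays in a compact interval of $(0,\infty)$, forcing tangency to the line $\delta=\epsilon$). The a priori restriction $0<p<P$ from Section \ref{sec:01} gives no positive lower bound, and the root equation \eqref{eq:12} alone, which is all you use, cannot produce one: it only ties $q$ to $p$ and $y$.

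The missing idea is to use the \emph{multiplicity} condition, not just the root condition. Differentiating \eqref{eq:11} in $y$ at a multiple root gives
\[
p^2=f'\!\left(-\tfrac{1}{p}f(y)+\tfrac{q}{p}\right)f'(y)+(r_2)'_y(y,\nu),
\]
and since $f$ is an increasing diffeomorphism on the compact interval involved, $f'$ is bounded above and below by positive constants there; as $(r_2)'_y\to 0$ this pins $p$ into a fixed interval $[P_0,P_1]$ with $P_0>0$ (and, incidentally, also yields the upper bound on $p$ without appealing to the ad hoc restriction $0<p<P$). Only after that does your estimate $q=py+f(y)+\cdots$ legitimately confine $q$ to a fixed interval $[-Q,Q]$, giving the compact $D=[P_0,P_1]\times[-Q,Q]\subset\{p>0\}$ to which Lemma \ref{lem:01}(c) applies. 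This derivative step is the substance of the paper's proof and is absent from yours.
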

\begin{proof}
The equation \eqref{eq:11} has multiple roots if its roots satisfy
the equation
\[
p^2=f'\left(-\frac{1}{p}f(y)+\frac{q}{p}\right)f'(y)+
(r_2)'_y(y,\nu).
\]

Now, as $f$ is a smooth increasing diffeomorphism, we have that
\linebreak $\min_{y\in[-1,1]}f'(y)>0$. Hence, fix positive
constants
\[
P_0<\min_{y\in[-1,1]}f'(y), \; P_1>\max_{y\in[-1,1]}f'(y)
\]
and
\[
Q>\max_{y\in[-1,1]}f'(y)+\max_{y\in[-1,1]}|f(y)|.
\]
Choose $D=[P_0,P_1]\times [-Q,Q]$. Then for $(p,q)\not\in D$ the
equation \eqref{eq:11} has only simple roots. Hence, $\Sigma_1\cup
\Sigma_2\subset \Phi^{-1}(Z_D)$.
\end{proof}

Using Lemma \ref{lem:01} we can perform a new parametrization
$(\delta, \epsilon, \lambda)\mapsto (\delta,p,q)$ so that
equations \eqref{eq:10}, \eqref{eq:11} become
\begin{eqnarray}
-py+q & = & f(y)+\tilde{r}_1(y,\delta,p,q),\label{eq:14} \\
-py+q & = &
f\left(-\frac{1}{p}f(y)+\frac{q}{p}\right)+\tilde{r}_2(y,\delta,p,q),\label{eq:15}
\end{eqnarray}
where $\tilde{r}_i(y,\delta,p,q)=r_i(y,\Phi^{-1}(\delta,p,q))$,
$i=1,2$.

From Lemma \ref{lem:01} and the properties of $r_i$ the following
properties of $\tilde{r}_i$ easily follow.

\begin{lemma}
The functions $\tilde{r}_i$ can be extended continuously to the
half plane $poq^+$, such that $\tilde{r}_i(x,0,p,q)=0$ and for
$\delta >0$, $\tilde{r}_i$ is of the class $C^k$ on $\Phi(V^+)$
and its extension to the half plane $poq^+$ is of class $C^1$.
Moreover, when $\delta \rightarrow 0$, any partial derivatives of
$\tilde{r}_i$, of order less than or equal to $k$, converge
uniformly to $0$ in $[-1,1]\times [P_0,P_1]\times[-Q,Q]$.
\end{lemma}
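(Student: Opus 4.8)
The plan is to track how the remainder functions $r_i(y,\nu)$ behave under the reparametrization $\Phi^{-1}$, using the structural information recorded in Lemma~\ref{lem:01}. Recall that $\tilde r_i(y,\delta,p,q) = r_i(y,\Phi^{-1}(\delta,p,q))$, that $r_1(y,\nu)=f_\nu(y)-f_0(y)$ and $r_2(y,\nu)=f_\nu(-\tfrac1p f_\nu(y)+\tfrac qp)-f_0(-\tfrac1p f_0(y)+\tfrac qp)$, and that $r_i(\cdot,\nu)\to 0$ in $C^k$-norm on $[-1,1]$ as $\nu\to 0$. First I would verify the continuous extension to $poq^+$: by Lemma~\ref{lem:01}(b) the extended map $\Phi^{-1}$ takes the half plane $poq^+=\{\delta=0\}$ to the single point $\nu=0$, and $\Phi^{-1}$ is continuous on $\Phi(V^+)\cup poq^+$; since $r_i(y,0)\equiv 0$ (as $f_0=f_0$), composing gives $\tilde r_i(y,0,p,q)=0$ for all $(p,q)$. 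The $C^k$-smoothness of $\tilde r_i$ on $\Phi(V^+)$ for $\delta>0$ is immediate from the chain rule, because $f_\nu$ is $C^k$ jointly in $(y,\nu)$ and $\Phi^{-1}$ is $C^k$ on the open set $\{\delta>0\}$ (it is a composition of $C^k$ diffeomorphisms away from $\delta=0$, the only place where $C_1,C_2$ degenerate).

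The substantive part is the $C^1$-regularity of the extension across $\delta=0$ together with the uniform convergence of derivatives. Here I would invoke Lemma~\ref{lem:01}(b) more carefully: the extended $\Phi^{-1}$ is continuously differentiable in $\delta$ at $\delta=0$, and of course it is trivially differentiable in $(p,q)$ there since it is constant in $(p,q)$ along $\delta=0$. Write $\Phi^{-1}(\delta,p,q)=(\epsilon(\delta,p,q),\delta,\lambda(\delta,p,q))$ and note $\epsilon(0,p,q)=\lambda(0,p,q)=0$. Differentiating $\tilde r_i(y,\delta,p,q)=r_i(y,\Phi^{-1}(\delta,p,q))$ with the chain rule, every $(p,q)$-derivative of $\tilde r_i$ is a sum of terms of the form $(\partial_\nu r_i)(y,\Phi^{-1})\cdot(\partial_{(p,q)}\Phi^{-1})$, and at $\delta=0$ the factor $\partial_{(p,q)}\Phi^{-1}$ vanishes while $\partial_\nu r_i$ stays bounded (indeed $\to 0$); the $\delta$-derivative similarly factors through $\partial_\delta\Phi^{-1}$, which is continuous up to $\delta=0$ by part~(b). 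So all first-order partials of $\tilde r_i$ extend continuously to $poq^+$, with value $0$ on it. For the convergence statement: on the compact box $[-1,1]\times[P_0,P_1]\times[-Q,Q]$, the corresponding horn $\Phi^{-1}(Z_D)$ shrinks to $\nu=0$ as $\delta\to 0$ (Corollary after Lemma~\ref{lem:01}), so $\Phi^{-1}(\delta,p,q)\to 0$ uniformly in $(p,q)$; since $r_i(\cdot,\nu)\to 0$ in $C^k$-norm as $\nu\to 0$, and each partial derivative of $\tilde r_i$ up to order $k$ is a universal polynomial expression in partials of $r_i$ (evaluated at $\Phi^{-1}$) and partials of $\Phi^{-1}$ (which are bounded on $Z_D$ by parts (c)-(d)), the uniform convergence of the $r_i$-derivatives to $0$ forces the uniform convergence of the $\tilde r_i$-derivatives to $0$.

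The main obstacle I anticipate is the bookkeeping at the interface $\delta=0$: the map $\Phi^{-1}$ is only $C^1$ (not $C^k$) in $\delta$ there, so one must be careful that only one $\delta$-derivative is taken on that locus, and that the higher-order $(p,q)$-derivatives — which are legitimate since $\Phi^{-1}$ is constant in $(p,q)$ when $\delta=0$, hence smooth in those variables — do not secretly require extra $\delta$-differentiability through mixed partials. The clean way around this is to treat the closed region $\Phi(V^+)\cup poq^+$ directly: establish continuity of each partial $\partial^\alpha\tilde r_i$ with $|\alpha|\le k$ on the open part $\{\delta>0\}$ by the chain rule, show each such partial tends to $0$ as $\delta\to 0^+$ uniformly in $(p,q)\in[P_0,P_1]\times[-Q,Q]$ by the horn-shrinking argument, and conclude that the function extended by $0$ on $poq^+$ has all these partials continuous on the closed box — which simultaneously gives the $C^1$ (in fact, uniform-$C^k$-in-the-limit) statement and the convergence claim. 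I would close by remarking that the hypotheses $\tilde r_i(y,0,p,q)=0$ and the uniform decay of derivatives are exactly what is needed to treat \eqref{eq:14}--\eqref{eq:15} as small $C^k$-perturbations of \eqref{eq:12}--\eqref{eq:13} uniformly on the relevant compact set, which is the point of the lemma.
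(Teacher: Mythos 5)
Your overall strategy is exactly what the paper intends: the paper offers no proof at all (it merely asserts that the properties ``easily follow'' from Lemma \ref{lem:01} and the properties of $r_i$), and your chain-rule/horn-shrinking argument is the natural way to carry that out. Two points, however, need correction.

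First, your parenthetical claim that $\partial_\nu r_i$ tends to $0$, and the consequent assertion that \emph{all} first-order partials of $\tilde r_i$ vanish on $poq^+$, are wrong for the $\delta$-derivative. Since $r_1(y,\nu)=f_\nu(y)-f_0(y)$, one has $D_\nu r_1(y,0)=D_\nu f_\nu(y)|_{\nu=0}$, which is generically nonzero; and by Lemma \ref{lem:01}(d) the vector $\partial_\delta\Phi^{-1}$ tends to the nonzero direction tangent to $\{\delta=\epsilon,\ \lambda=0\}$. Hence $\partial_\delta\tilde r_i$ extends continuously to $poq^+$ but with the generally nonzero value $D_\nu r_i(y,0)\cdot\partial_\delta\Phi^{-1}(0^+,p,q)$. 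This does not damage the $C^1$ conclusion (only continuity of the first partials is needed), but it means the uniform convergence to $0$ must be read as concerning the partials in $(y,p,q)$ — consistent with the box $[-1,1]\times[P_0,P_1]\times[-Q,Q]$ over which uniformity is asserted, and with every downstream use in Sections \ref{sec:04}--\ref{sec:05}, where only $(y,p,q)$-derivatives of $\tilde r_i$ appear.

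Second, for the derivatives of order $2$ up to $k$ in $(p,q)$, boundedness of the partials of $\Phi^{-1}$ is not enough: the Fa\`a di Bruno expansion contains terms of the form $D_\nu r_i\cdot\partial^\alpha_{(p,q)}\Phi^{-1}$ in which the factor $D_\nu r_i$ does \emph{not} tend to $0$, so you need every $(p,q)$-partial of $\Phi^{-1}$ of order $\geq 1$ to tend to $0$ as $\delta\to0^+$, uniformly on $[P_0,P_1]\times[-Q,Q]$. This is true — from $C_1(\epsilon)=pC_2(\delta)$ and $\lambda=qC_2(\delta)$ one computes $\partial_p\epsilon\sim\epsilon^{3/2}/p$, $\partial_q\lambda=C_2(\delta)$, etc., all exponentially small — but it is the actual content that has to be checked, not mere boundedness. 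With these two repairs (continuity rather than vanishing of the $\delta$-partial, and decay rather than boundedness of the $(p,q)$-partials of $\Phi^{-1}$), your argument is complete.
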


\section{Characterization of surface $\Sigma_1$}
\label{sec:04}

From the previous section, to characterize the surfaces $\Sigma_1$
and $\Sigma_2$, we need to study the equations \eqref{eq:14} and
\eqref{eq:15}. First, we give the description of surface
$\Sigma_1$ in the space of parameters $(\delta, p, q)$.

The surface $\Sigma_1$ on the space of parameters $\mathbb
R^3_{(\delta, p, q)}$ is the set of points $(\delta, p, q)$ such
that to these values of parameters the fixed points of the fist
return $\Delta_\nu (y) $, or equivalently the roots of
\eqref{eq:14}, are multiple roots of $\Delta_\nu^2(y)-y$, or
equivalently are multiple roots of equation \eqref{eq:15}. Hence,
to determine $\Sigma_1$ we have to study the following system of
equations
\[
\begin{array}{lcl}
\Delta_\nu (y)-y & = & 0, \\ \displaystyle \frac{\partial
\Delta_\nu^2}{\partial y}(y)-1 & = & 0,
\end{array}
\]
or the equivalently (by the chain rule) the system
\begin{equation}
\label{eq:16}
\begin{array}{lcl}
\Delta_\nu (y)-y & = & 0, \\ \displaystyle \frac{\partial
\Delta_\nu}{\partial y}(y)+1 & = & 0.
\end{array}
\end{equation}
Note that the roots of $\Delta_\nu (y)-y$ are all simple.
Moreover, for each value of the parameters $(\delta,p,q)$
corresponds a unique root of $\Delta_\nu (y)-y$, i.e the
Poincar\'e map has only one fixed point for each value of the
parameters $(\delta,p,q)$. In fact, if $y$ is a multiple root of
$\Delta_\nu (y)-y$, i.e. $\displaystyle \frac{\partial
\Delta_\nu}{\partial y}(y)=1$, then by \eqref{eq:14} we have that
$f'(y)=-p-(\tilde{r}_{1})'_y(y,\delta,p,q)$, but this is a
contradiction, because $f'(y)>0$, $p>0$ and
$(\tilde{r}_{1})'_y\rightarrow 0$, when $\delta\rightarrow 0$.
Now, as $\Delta_\nu (y)=f^{-1}_{\Phi^{-1}(\delta,p,q)}(-py+q)$, it
follows from \eqref{eq:14} that system \eqref{eq:16} becomes
\begin{equation}
\label{eq:17}
\begin{array}{rcl}
-py+q & = & f(y)+\tilde{r}_1(y,\delta,p,q), \\ \displaystyle
f'(y)+ (\tilde{r}_{1})'_y(y,\delta,p,q)& = & p.
\end{array}
\end{equation}

Define the function $\phi_\delta$ by
\[
\phi_\delta (y,p,q)=-py+q - f(y)-\tilde{r}_1(y,\delta,p,q).
\]
For $\delta =0$, $\tilde{r}_1(y,0,p,q)=0$, and as the gradient of
$\phi_0$ does not vanish, it follows that $\phi_0=0$ is a regular
surface in $\mathbb R^3_{(y,p,q)}$. Hence, for $\delta$
sufficiently small, $\phi_\delta=0$ is also a regular surface in
$\mathbb R^3_{(y,p,q)}$. Therefore, for $\delta$ fix small enough,
system \eqref{eq:17} determine a curve $C_\delta$ on $\mathbb
R^3_{(y,p,q)}$ and the projection $(y,p,q)\mapsto (p,q)$ is a
curve $L_\delta$ in $\mathbb R^2_{(p,q)}$ without singular points
and self-intersections, because $(\phi_{\delta})'_y(y,p,q)\neq 0$.
In fact, for $\delta =0$, from \eqref{eq:17} it follows that
\begin{equation}
\label{eq:18} L_0(y)=(f'(y), f'(y)y+f(y)).
\end{equation}

The following result is a straightforward consequence of the
previous arguments.

\begin{theorem}
\label{the:03} The surface $\Sigma_1$ is a horn in $\mathbb
R^3_{(\epsilon, \delta, \lambda)}$ defined in the following way.
Consider in the half plane $\mathbb R^2_{(p,q)}\cap \{p>0\}$ the
trace of curve $L_0$ and the embedding of $\mathbb R^2_{(p,q)}$
into $\mathbb R^2_{(\delta,p,q)}$ as part of the plane $\delta
=0$. Let $Z_1$ be the cylinder in $\Phi(V^+)$ over $L_0$, with the
axis parallel to $\delta$ axis of height $\delta_0$. Then for
sufficiently small $\delta_0$ the ``blown-up horn''
\[
Z=\Phi(\Sigma_1\cap V^+)
\]
is diffeomorphic to $Z_1$. The diffeomorphism taking $Z$ into
$Z_1$ preserve the foliation $\delta=$const, is $C^1$-smooth in
$\delta$, and its difference from the identity map on the fiber
$\delta=$const is of the order $O(\delta)$.
\end{theorem}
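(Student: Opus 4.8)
The plan is to realize $Z=\Phi(\Sigma_1\cap V^+)$ as a one-parameter family of embedded arcs $L_\delta$, $0<\delta<\delta_0$, parametrized coherently by $y$ and degenerating to the cylinder over $L_0$ as $\delta\to 0$; the diffeomorphism onto $Z_1$ is then defined fiberwise by $L_\delta(y)\mapsto L_0(y)$, and the description of $\Sigma_1$ itself as a horn in $\mathbb R^3_{(\epsilon,\delta,\lambda)}$ is obtained by applying $\Phi^{-1}$ and invoking Lemma \ref{lem:01} together with its corollaries.

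First I would solve system \eqref{eq:17} for $(p,q)$ as a function of $(y,\delta)$. Writing it as $H(y,p,q,\delta)=0$ with $H=\bigl(\phi_\delta(y,p,q),\ p-f'(y)-(\tilde r_1)'_y(y,\delta,p,q)\bigr)$, the matrix $\partial H/\partial(p,q)$ has, at $\delta=0$, rows $(-y,1)$ and $(1,0)$, hence determinant $-1\neq 0$. By the lemma on $\tilde r_1$ of Section \ref{sec:03} (and the regularity of $\Phi^{-1}$ from part (b) of Lemma \ref{lem:01}), $H$ is $C^1$ jointly near $\delta=0$, of the appropriate finite smoothness in $(y,p,q)$ for $\delta>0$, and the contributions of $\tilde r_1$ and its derivatives tend to $0$ uniformly on $[-1,1]\times[P_0,P_1]\times[-Q,Q]$ as $\delta\to 0$; therefore $\partial H/\partial(p,q)$ stays invertible for small $\delta$ and the implicit function theorem yields $(p,q)=L_\delta(y)=(P(y,\delta),\tilde Q(y,\delta))$ on the interval $I=[-1,1]$, with $L_0(y)=(f'(y),\,f'(y)y+f(y))$ as in \eqref{eq:18}, $L_\delta$ finitely smooth in $y$, continuously differentiable in $\delta$, and $L_\delta(y)-L_0(y)=O(\delta)$ uniformly on $I$ (since $\Phi^{-1}(\delta,\cdot,\cdot)=O(\delta)$ by Lemma \ref{lem:01}(b), so $\tilde r_1$ and its relevant derivatives are $O(\delta)$).

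Next I would verify that each $L_\delta$ is an embedding of $I$ into the half-plane $\{p>0\}$. At $\delta=0$ one has $L_0'(y)=(f''(y),\,f''(y)y+2f'(y))\neq 0$ because $f'(y)>0$, so $L_0$ is an immersion; and if $L_0(y_1)=L_0(y_2)$ with $y_1\neq y_2$, then $f'(y_1)=f'(y_2)=a$ and $a(y_1-y_2)=f(y_2)-f(y_1)$, whence $a=-f'(\xi)$ for some $\xi$ between $y_1$ and $y_2$ by the mean value theorem, contradicting $a=f'(y_1)>0$; thus $L_0$ is injective, hence an embedding of the compact interval $I$. Since $L_\delta\to L_0$ in $C^1$ as $\delta\to 0$ and embeddings of compact manifolds (with boundary) are stable under $C^1$-small perturbations, $L_\delta$ is an embedding for all small $\delta$. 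Moreover, as recalled before the statement, for each $(\delta,p,q)$ equation \eqref{eq:14} has a unique and simple root $y$, so a point lies on $\Sigma_1$ in the blown-up coordinates exactly when that root also satisfies the second equation of \eqref{eq:17}; consequently $Z=\bigcup_{0<\delta<\delta_0}\{\delta\}\times L_\delta(I)$.

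Finally I would assemble the diffeomorphism. With $D=[P_0,P_1]\times[-Q,Q]$ as in the corollary of Section \ref{sec:03} one has $L_0(I)\subseteq D$ (the $p$-coordinate $f'(y)\in(P_0,P_1)$ and $|f'(y)y+f(y)|<Q$), so by part (c) of Lemma \ref{lem:01} both $Z_1=(0,\delta_0)\times L_0(I)$ and $Z$ lie in $Z_D\subseteq\Phi(V^+)$ once $\delta_0\le\delta_D$. Define $\Psi:Z\to Z_1$ by $\Psi(\delta,L_\delta(y))=(\delta,L_0(y))$. It is well defined by injectivity of $L_\delta$, preserves the foliation $\delta=\mathrm{const}$ by construction, and, composed with the smooth local inverse of $L_0$ on its image, is as regular as $L_\delta$ allows, in particular $C^1$ in $\delta$; on the fiber $\delta=c$ its deviation from the identity of $\mathbb R^2_{(p,q)}$ is $L_0(y)-L_c(y)=O(c)$ by the previous estimate, and $\Psi^{-1}(\delta,L_0(y))=(\delta,L_\delta(y))$ is equally regular, so $\Psi$ is the asserted diffeomorphism. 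Pulling back by $\Phi$ shows that $\Sigma_1\cap V^+=\Phi^{-1}(Z)$ is the narrow horn with vertex $\nu=0$ tangent to the line $\delta=\epsilon$, $\lambda=0$ described in the corollary to Lemma \ref{lem:01}. The delicate step is the regularity bookkeeping across $\delta=0$: one must confirm that the implicitly defined $L_\delta$ inherits precisely ``$C^1$ in $\delta$ and $O(\delta)$-close to $L_0$'' from the weak behaviour of $\tilde r_1$ and $\Phi^{-1}$ established in Section \ref{sec:03} and Lemma \ref{lem:01}, and that this survives composition with $L_0^{-1}$; the injectivity of $L_0$ and the fiberwise gluing are comparatively routine.
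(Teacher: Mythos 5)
Your proof is correct and follows essentially the same route as the paper, which states Theorem \ref{the:03} as a ``straightforward consequence'' of the construction in Section \ref{sec:04} of the curve $C_\delta$ and its regular, self-intersection-free projection $L_\delta$ parametrized over $y$ with $L_\delta\to L_0$; you are simply supplying the implicit-function-theorem and fiberwise-gluing details. The only cosmetic difference is that the paper gets injectivity of the projection for every small $\delta$ directly from $(\phi_\delta)'_y\neq 0$ (strict monotonicity of $\phi_\delta$ in $y$), whereas you prove it at $\delta=0$ via the mean value theorem and then invoke $C^1$-stability of embeddings of a compact interval --- both arguments work.
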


\section{Characterization of surface $\Sigma_2$}
\label{sec:05}

Recall that $\Sigma=\Sigma_1\cup \Sigma_2$ is the bifurcation
surface for limit cycles, that is, the surface in the parameter
space on which the number of cycles change due to splitting and
disappearance of multiple cycles or, what is the same, the surface
where the number of roots of equation \eqref{eq:15} change.

Consider the function $\psi$ defined by
\[
\psi (y,\delta,p,q)=-py+q
-f\left(-\frac{1}{p}f(y)+\frac{q}{p}\right)-\tilde{r}_2(y,\delta,p,q).
\]
The surface $\Sigma$ in the space of parameters $\mathbb
R^3_{(\delta,p,q)}$ is the projection on $\mathbb
R^3_{(\delta,p,q)}$ of the manifold determined by the system
\[
\begin{array}{rcl}
\psi (y,\delta,p,q) & = & 0, \\
\psi'_y (y,\delta,p,q) & = & 0.
\end{array}
\]

Let $\varphi_\delta (y,p,q)=\psi(y,\delta,p,q)$. Note that
$\displaystyle\varphi_0(y,p,q)=-py+q
-f\left(-\frac{1}{p}f(y)+\frac{q}{p}\right)$. We want to study the
apparent contour $\Lambda_0$ of the surface $\varphi_0=0$, i.e.
the projection on $\mathbb R^2_{(p,q)}$ of the curve determined by
the system
\[
\begin{array}{rcl}
\varphi_0 (y,p,q) & = & 0, \\
(\varphi_0)'_y (y,p,q) & = & 0.
\end{array}
\]
The curve determined by the system above is said to be the {\it
horizon} of the surface $\varphi_0=0$. Note that, the curve $L_0$
defined by \eqref{eq:18} belongs the apparent contour of surface
$\varphi_0=0$. In fact, the curve $(y,f'(y), f'(y)y+f(y))$ is the
parametrization of a piece of the horizon of $\varphi_0 =0$,
correspondent to surface $\Sigma_1$ and so $(f'(y), f'(y)y+f(y))$
is a piece of apparent contour of $\varphi_0=0$.
\begin{proposition}
\label{pro:02} {For} a generic function $f$ the set
\[
A=\left\{ y:\;\frac{\partial^3\varphi_0 }{\partial y^3}(y,f'(y),
f'(y)y+f(y))=0\right\}
\]
is finite and for all $y\in A$
\[
\frac{\partial^5\varphi_0 }{\partial y^5}(y,f'(y),
f'(y)y+f(y))\neq 0.
\]
Moreover, the determinant of the Jacobian matrix of the map
\linebreak$((\varphi_0)'_y (y,p,q),(\varphi_0)'''_{yyy} (y,p,q))$
with respect the parameters $p$, $q$ is different from zero at
points $(y,f'(y), f'(y)y+f(y))$ with $y\in A$.
\end{proposition}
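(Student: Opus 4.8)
The plan is to compute $\varphi_0$ and its $y$-derivatives explicitly along the curve $(y,f'(y),f'(y)y+f(y))$ — which, as noted, parametrizes the part of the horizon corresponding to $\Sigma_1$ — and then invoke Thom-type transversality on the $\infty$-jet of $f$. Write $p=f'(y)$, $q=f'(y)y+f(y)$; then $-py+q=f(y)$, so the inner argument $-\frac1p f(y)+\frac qp = -\frac{f(y)}{f'(y)} + y + \frac{f(y)}{f'(y)} = y$. Hence along this curve $\varphi_0 = f(y)-f(y)=0$ automatically, confirming the curve lies in $\{\varphi_0=0\}$, and moreover the inner argument equals $y$ identically. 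First I would differentiate $\varphi_0(y,p,q) = -py+q - f\bigl(-\tfrac1p f(y)+\tfrac qp\bigr)$ in $y$ with $p,q$ held fixed, obtaining
\[
(\varphi_0)'_y = -p + \tfrac{1}{p} f(y)\, f'\!\bigl(u\bigr)\bigr|_{u=-f(y)/p+q/p},
\]
and higher derivatives by the chain rule; then I would substitute $p=f'(y)$, $q=f'(y)y+f(y)$ so that $u=y$. One checks $(\varphi_0)'_y = -f'(y) + \tfrac{f(y)}{f'(y)}\cdot 0$? — no: note $u'_y = -f'(y)/p = -1$ at this point, so the relevant quantity is $f'(u)\cdot u'_y = f'(y)\cdot(-1)$, giving $(\varphi_0)'_y = -p + (\text{something})$; carrying the computation through, $(\varphi_0)'_y$ also vanishes on this curve, consistent with it being the horizon. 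The key point is that $(\varphi_0)''_{yy}$, $(\varphi_0)'''_{yyy}$, $(\varphi_0)^{(4)}$, $(\varphi_0)^{(5)}$ restricted to this curve become, after the substitution, explicit universal polynomial expressions in $f(y), f'(y), f''(y), f'''(y), f^{(4)}(y), f^{(5)}(y)$ divided by powers of $f'(y)$.

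Next I would set up the jet-transversality argument. Let $j^5 f: \mathbb{R}\to J^5(\mathbb{R},\mathbb{R})$ be the $5$-jet extension. Define $\Phi_3(y) := \partial^3_{yyy}\varphi_0\bigl(y,f'(y),f'(y)y+f(y)\bigr)$ and $\Phi_5(y)$ analogously; by the computation above these are $P_3\bigl(j^5 f(y)\bigr)$ and $P_5\bigl(j^5 f(y)\bigr)$ for explicit rational functions $P_3, P_5$ on the jet space with denominators a power of the $f'$-coordinate (nonvanishing since $f$ is an increasing diffeomorphism, so $f'>0$). The set $A$ is then $\{y: P_3(j^5 f(y))=0\}$. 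I would show: (i) the subvariety $\{P_3=0\}\subset J^5$ is nonempty of codimension $1$, and $\{P_3=0,\;P_5=0\}$ has codimension $\geq 2$ — this requires verifying that $P_3$ and $P_5$ are not proportional, which one does by inspecting their leading dependence on the $f^{(3)}$, $f^{(4)}$, $f^{(5)}$ coordinates respectively (a single explicit computation). Then by the Thom transversality theorem, for $f$ in a residual (hence generic, dense) subset of $C^\infty$ the map $j^5 f$ is transverse to both strata; transversality to the codim-$2$ stratum in a $1$-dimensional source forces $j^5 f(\mathbb{R})$ to miss it, i.e. $P_5(j^5 f(y))\neq 0$ whenever $P_3(j^5 f(y))=0$, so $A\cap\{\Phi_5=0\}=\emptyset$; transversality to $\{P_3=0\}$ makes the zeros of $\Phi_3$ nondegenerate, hence $A$ discrete, and since we work on the compact $y\in[-1,1]$, $A$ is finite.

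For the last assertion — the Jacobian of $\bigl((\varphi_0)'_y, (\varphi_0)'''_{yyy}\bigr)$ in $(p,q)$ being nonzero at points of $A$ — I would compute the two partials $\partial_p, \partial_q$ of each component directly from the chain-rule formulas. Since $\partial_q \varphi_0 = 1 - \tfrac1p f'(u)$ and $\partial_p\varphi_0 = -y + \tfrac{f(y)}{p^2}f'(u)$, differentiating further in $y$ and evaluating at $u=y$, $p=f'(y)$ produces a $2\times 2$ matrix whose entries are again explicit expressions in the jet of $f$; its determinant is a rational function $J(j^5 f(y))$, and I would check that $\{J=0\}$ together with $\{P_3=0\}$ again cuts out codimension $\geq 2$ in $J^5$, so generically $j^5 f$ avoids it and $J\neq 0$ on $A$.

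\textbf{Main obstacle.} The genuine work — and the step most likely to hide a subtlety — is verifying the codimension/independence claims: that $P_3,P_5$ (and $P_3, J$) are "genuinely independent" functions on the jet space, i.e. their common zero set really has codimension $\geq 2$. This is not automatic from counting variables; it needs an honest look at the explicit polynomials to confirm, for instance, that $P_5$ depends nontrivially on the $f^{(5)}$-coordinate after one restricts to $\{P_3=0\}$ (so that $\{P_3=0\}\cap\{P_5=0\}$ is a proper subvariety of $\{P_3=0\}$), and similarly for $J$. Once the formulas are in hand this is a finite check, but it is the crux; everything else is the standard transversality packaging plus compactness of $[-1,1]$.
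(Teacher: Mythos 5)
Your proposal is correct and follows essentially the same route as the paper: compute the successive $y$-derivatives of $\varphi_0$ along the curve $(y,f'(y),f'(y)y+f(y))$ (where the inner argument reduces to $y$), express the resulting conditions as explicit algebraic equations on the $5$-jet of $f$, and apply Thom's transversality theorem to the corresponding codimension-$1$ and codimension-$2$ submanifolds of $J^5(\mathbb{R},\mathbb{R})\cong\mathbb{R}^7$, treating the Jacobian condition analogously. The independence check you flag as the main obstacle is resolved exactly as you anticipate: in the paper's explicit formulas the two defining equations are solved for the $f'''$- and $f^{(5)}$-coordinates respectively (with coefficient $-2$ and denominator $f'>0$), so the strata are graphs and genuinely have the stated codimensions.
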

\begin{proof}
The proof follows from Thom's  Transverslity Theorem. Details are
given for the first part; the second one follows analogously.

 We have that
\[
\varphi_0 (y,f'(y), f'(y)y+f(y))=\frac{\partial \varphi_0
}{\partial y}(y,f'(y), f'(y)y+f(y))=0,
\]
\[
\frac{\partial^2\varphi_0 }{\partial y^2}(y,f'(y), f'(y)y+f(y))=0,
\]
\[
\frac{\partial^3\varphi_0 }{\partial y^3}(y,f'(y),
f'(y)y+f(y))=-2f'''(y)+3\frac{f''(y)^2}{f'(y)},
\]
\[
\frac{\partial^4\varphi_0 }{\partial y^4}(y,f'(y),
f'(y)y+f(y))=\frac{f''(y)}{f'(y)}\left(-2f'''(y)+3\frac{f''(y)^2}{f'(y)}\right),
\]
and
\[
\frac{\partial^5\varphi_0 }{\partial y^5}(y,f'(y),
f'(y)y+f(y))=-2f^{(5)}(y)+\frac{15}{f'(y)}f^{(4)}(y)f''(y)
\]
\[
-\frac{5}{f'(y)^2}f'''(y)f''(y)^2-\frac{10}{f'(y)}f'''(y)^2.
\]
Note that, $\displaystyle\frac{\partial^3\varphi_0 }{\partial
y^3}(y,f'(y), f'(y)y+f(y))=0$ implies that $\displaystyle
f'''(y)=\frac{3f''(y)^2}{2f'(y)}$,
$\displaystyle\frac{\partial^4\varphi_0 }{\partial y^4}(y,f'(y),
f'(y)y+f(y))=0$  and
\[
\frac{\partial^5\varphi_0 }{\partial y^5}(y,f'(y),
f'(y)y+f(y))=-2f^{(5)}(y)
\]
\[
+\frac{15f''(y)}{f'(y)}\left(f^{(4)}(y)-\frac{2f''(y)^3}{f'(y)^2}\right).
\]
Now, consider the $5$-jet
\[
j^5f:\mathbb R\rightarrow J^5(\mathbb R,\mathbb R)
\]
of $f$. The space $J^5(\mathbb R,\mathbb R)$ may be identified
with $\mathbb R^7$ and the jet $j^5f$ with the map
\[
y\mapsto (y,f(y),f'(y),f''(y),f'''(y),f^{(4)}(y),f^{(5)}(y)),
\]
from $\mathbb R$ into $\mathbb R^7$; then to prove the proposition
we apply Thom's Transversality Theorem (see \cite{MD}) to the
submanifolds of codimensions $1$ and $2$ in $\mathbb R^7$
consisting of elements of the form $(y,a,b,c,d,e,g)$ with
$d=3c^2/(2b)$ and $d=3c^2/(2b)$, $-2g+15c/b(e-2c^3/b^2)=0$
respectively. Denoting the respective submanifolds by $V$ and $W$,
we have that for a generic function $f$, $j^5f^{-1}(V)$ is a
discrete set and $j^5f^{-1}(W)$ is empty.
\end{proof}

Now we will characterize the shape of the apparent contour in the
neighborhood of a point $y_0\in A$, where $A$ is the set defined
in Proposition \ref{pro:02}. These points are called
$2$-{codimension flips}. Without loss of generality, we can
suppose that $y_0=0$. Hence, from Proposition \ref{pro:02} and by
standard theory about flip bifurcation (see \cite{BV} and
\cite{JG} for more details), we have that for a generic function
$f$ after a conjugacy by a local change of coordinates $\varphi_0$
takes the form
\[
\varphi_0(y,p,q)=py+qy^3-y^5+O_{(p,q)}(\|y\|^6).
\]
Near the origin the surface $\varphi_0=0$ has the same shape as
\linebreak$G(y,p,q)/y=0$, where $G(y,p,q)=py+qy^3-y^5$. In fact,
there exists a $C^1$ local diffeomorphism mapping the apparent
contour $\Lambda_0$ of $\varphi_0=0$ onto the corresponding one
for $G=0$. Thus, we have the following result.

\begin{proposition}
\label{pro:03} For a generic function $f$, the shape of the
apparent contour $\Lambda_0$ of $\varphi_0=0$, in a neighborhood
of a point which belongs to set $A$, is given by Figure
\ref{figura2}.
\end{proposition}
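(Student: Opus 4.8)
The plan is to reduce the statement, via the normal form already isolated in the discussion preceding the proposition, to a single explicit polynomial model whose apparent contour can be computed by hand, and then transport the result by the $C^1$ diffeomorphism furnished there. By Proposition~\ref{pro:02} and the standard theory of codimension-two flip bifurcations cited there, after a $C^1$ conjugacy we may assume $y_0=0\in A$ and $\varphi_0(y,p,q)=py+qy^3-y^5+O_{(p,q)}(\|y\|^6)$, with a local $C^1$ diffeomorphism of the $(p,q)$-plane carrying $\Lambda_0$ onto the apparent contour of $\{G=0\}$, $G(y,p,q)=py+qy^3-y^5$. So it suffices to describe that model contour.

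To compute it, first I would factor $G=y\,(p+qy^2-y^4)$, so that $\{G=0\}$ is the union of the plane $\{y=0\}$ and the sheet $S=\{p+qy^2-y^4=0\}$; these cross transversally along $\{y=0,\ p=0\}$, and $S$ is a smooth graph $p=y^4-qy^2$ over the $(y,q)$-plane. The restriction of the projection $(y,p,q)\mapsto(p,q)$ to $S$ is $(y,q)\mapsto(y^4-qy^2,\,q)$, whose Jacobian determinant is $4y^3-2qy=2y(2y^2-q)$; its critical set is $\{y=0\}\cup\{q=2y^2\}$, with images the line $\ell=\{p=0\}$ and the half-parabola $\gamma=\{p=-q^2/4,\ q\ge 0\}$ (the latter traced twice, by $y$ and $-y$). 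The plane $\{y=0\}$ projects diffeomorphically and adds nothing, and the crossing locus also projects onto $\ell$; hence the apparent contour of $\{G=0\}$ is exactly $\ell\cup\gamma$. A short check shows that away from the origin both are genuine Whitney-fold images — along $\{y=0\}$ the kernel $(1,0)$ of the Jacobian is transverse to the critical curve and the determinant vanishes simply — while at the origin $\gamma$ emanates from the corresponding point of $\ell$ tangentially and lies in $\{p\le 0,\ q\ge 0\}$. Finally, $L_0$ being the $\Sigma_1$-part of the horizon (as noted after \eqref{eq:18}) and a regular arc, it is $\ell$ that corresponds to the trace of $\Sigma_1$ and $\gamma$ that corresponds to the trace of $\Sigma_2$.

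Transporting $\ell\cup\gamma$ by the $C^1$ diffeomorphism then yields that near $L_0(y_0)$ the contour $\Lambda_0$ consists of a regular $C^1$ arc with a one-sided $C^1$ half-arc attached to it tangentially, which is precisely the configuration of Figure~\ref{figura2}. The only point that needs care is that this identification is merely $C^1$, so only first-order contact of the two branches and the side on which the half-arc lies may legitimately be read off the model (that $\gamma$ is exactly a parabola is a second-order feature and is not asserted); and the genericity hypotheses of Proposition~\ref{pro:02} are exactly what places us in the non-degenerate codimension-two flip with $(p,q)$ a versal unfolding. Thus the one delicate analytic step — absorbing the remainder $O_{(p,q)}(\|y\|^6)$ without changing the shape of the contour — is already discharged by the flip-bifurcation normal-form theory invoked before the statement, and the work remaining in this proof is the elementary computation above.
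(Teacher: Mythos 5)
Your proposal is correct and follows essentially the same route as the paper: reduce to the codimension-two flip normal form $\varphi_0(y,p,q)=py+qy^3-y^5+O_{(p,q)}(\|y\|^6)$ via Proposition \ref{pro:02} and the cited flip-bifurcation theory, then read off the apparent contour of the model $G=0$. The only difference is that you carry out explicitly the elementary computation of that model contour (the line $p=0$ together with the tangent half-parabola $p=-q^2/4$, $q\ge 0$), which the paper leaves implicit before asserting Figure \ref{figura2}.
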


\begin{figure}[ptb]
\begin{center}
\psfrag{X}[l][B]{$y$} \psfrag{P}[l][B]{$p$}
\psfrag{G}[l][B]{$\Lambda_0$} \psfrag{F}[l][B]{$\varphi_0 =0$}
\psfrag{Q}[l][B]{$q$} \psfrag{L}[l][B]{ $L_0$}
\includegraphics[height=2in,width=2in]{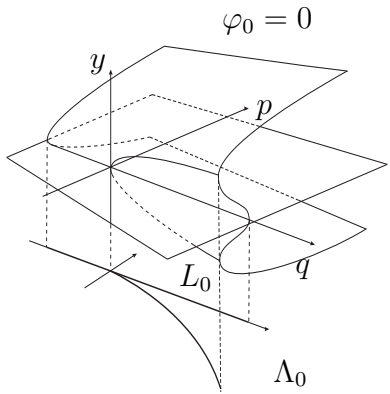}
\end{center}
\caption{Flip bifurcation of codimension $2$.} \label{figura2}
\end{figure}

The next result gives the shape of apparent contour $\Lambda_0$ of
the surface $\varphi_0=0$.

\begin{proposition}
\label{pro:04} {For} a generic function $f$, the apparent contour
$\Lambda_0$ of $\varphi_0=0$ is a curve having as singularities a
finite number of ordinary cusps and codimension-$2$ flips, its
self-intersections are transversal and occur only at smooth arcs.
Moreover, $\Lambda_0$ does not have more than one
self-intersections at a unique point, there are no
self-intersections at the endpoints of $\Lambda_0$ and they are
not singularities.
\end{proposition}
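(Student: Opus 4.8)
\textbf{Proof proposal for Proposition \ref{pro:04}.}

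The plan is to obtain the global shape of $\Lambda_0$ by patching together the local models already established in the preceding propositions with a genericity argument governing the remaining (codimension-one) singularities and the self-intersections. First I would recall that $\Lambda_0$ is the apparent contour of the surface $\varphi_0 = 0$ under the projection $(y,p,q)\mapsto(p,q)$, i.e. the image of the horizon curve
\[
H_0=\{(y,p,q):\varphi_0(y,p,q)=0,\ (\varphi_0)'_y(y,p,q)=0\}.
\]
Since $\varphi_0=0$ is a regular surface (its gradient does not vanish, as noted before Theorem \ref{the:03}), $H_0$ is a smooth curve away from the points where $(\varphi_0)''_{yy}$ also vanishes; and the projection of $H_0$ to the $(p,q)$-plane is an immersion except where $(\varphi_0)''_{yy}=0$. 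So the singular points of $\Lambda_0$ (as the image of a map from a smooth curve) come from two sources: (i) points of $H_0$ where the projection fails to be an immersion, i.e. where $(\varphi_0)'_y=(\varphi_0)''_{yy}=0$; and (ii) distinct points of $H_0$ whose images coincide, i.e. self-intersections. I would organize the proof around analyzing these two sources.

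For source (i): at a point of $H_0$ where additionally $(\varphi_0)''_{yy}=0$ but $(\varphi_0)'''_{yyy}\neq 0$, the standard catastrophe-theory normal form (the Whitney fold/cusp analysis, or equivalently the theory of apparent contours of surfaces) gives an ordinary cusp of $\Lambda_0$, provided the relevant transversality condition on the parameters $(p,q)$ holds — this is exactly the nonvanishing of the Jacobian of $((\varphi_0)'_y,(\varphi_0)''_{yy})$ with respect to $(p,q)$, which for a generic $f$ holds at all but finitely many such points by a Thom transversality argument identical in spirit to Proposition \ref{pro:02}. The finitely many exceptional points where $(\varphi_0)'''_{yyy}$ also vanishes are precisely the set $A$ of codimension-$2$ flips, and at those Proposition \ref{pro:03} (via the model $G(y,p,q)/y=0$) already supplies the local picture of Figure \ref{figura2}. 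Thus away from $A$ the contour has only ordinary cusps, and $A$ is finite, accounting for the first sentence of the statement. One must also check (again by Thom transversality on the appropriate jet) that for generic $f$ there are no higher cusps and that the cusp points and the flip points are distinct from the endpoints $y=\pm 1$ and from each other.

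For source (ii), the self-intersections: a self-intersection corresponds to a pair $y_1\neq y_2$ with $(\varphi_0)'_y(y_i,p,q)=0$ and $\varphi_0(y_i,p,q)=0$ and identical $(p,q)$. Eliminating, one asks for solutions of a system of four equations in the three unknowns $(y_1,y_2,p,q)$ — nominally codimension one, hence isolated points in the $(p,q)$-plane — and transversality of the two branches is the condition that the two velocity vectors $\tfrac{d}{dy}L_0^{(i)}$ are independent, which fails only on a further codimension-one locus inside an already codimension-one set, hence never for generic $f$. The same counting shows a triple point (three distinct $y_i$) imposes codimension two and so does not occur, giving ``not more than one self-intersection at a unique point''; and a self-intersection occurring at a cusp, or at an endpoint, or coinciding with a flip, each adds a codimension and is excluded for generic $f$. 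I would make all of these precise by writing the relevant multi-jet transversality statements for $j^r f$ (the multijet version of Thom's theorem, as used already in Proposition \ref{pro:02}) and invoking genericity. The main obstacle I anticipate is the bookkeeping for the self-intersection analysis: one must set up the right multijet bundle (pairs, then triples, of points) and verify that the ``bad'' strata — non-transversal crossing, crossing at a cusp, crossing at an endpoint, coincident crossings — all have positive codimension after accounting for the constraints already forced by lying on the horizon; the computation that the crossing branches are transversal for generic $f$ is where the real content lies, whereas the cusp and flip parts are essentially quoted from Propositions \ref{pro:02} and \ref{pro:03} and the classical theory of \cite{BV}, \cite{JG}.
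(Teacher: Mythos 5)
Your overall strategy (local models at the singular points plus jet--transversality for everything else) matches the paper's, and your treatment of the self-intersections via multijet transversality and codimension counting is essentially the paper's argument (modulo the slip that a double point is $4$ equations in the $4$ unknowns $(y_1,y_2,p,q)$, not three). The genuine gap is in your analysis of the non-immersion points of the projection. You treat the locus $\{\varphi_0=(\varphi_0)'_y=(\varphi_0)''_{yy}=0\}$ as if it were generically finite, producing ordinary cusps wherever $(\varphi_0)'''_{yyy}\neq 0$ and transversality holds ``at all but finitely many such points.'' But $\varphi_0$ is essentially $\Delta_\nu^2-\mathrm{id}$, and the chain rule forces $(\Delta_\nu^2)''(y)=0$ at every flip fixed point ($\Delta_\nu(y)=y$, $\Delta_\nu'(y)=-1$); hence $(\varphi_0)''_{yy}$ vanishes identically along the entire curve $L_0$ of \eqref{eq:18} (this is exactly the computation in Proposition \ref{pro:02}), and no genericity hypothesis on $f$ can remove this structural degeneracy. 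Applied as stated, your single-jet Thom argument ``identical in spirit to Proposition \ref{pro:02}'' would wrongly conclude that this locus is finite, and would wrongly classify the generic points of $L_0$ as cusps of $\Lambda_0$, whereas $L_0(y)=(f'(y),f'(y)y+f(y))$ is an immersed smooth arc of $\Lambda_0$ (since $f'>0$) along which nothing bifurcates except at the finitely many points of $A$.

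The missing idea, which is the actual content of the paper's proof, is the splitting of the horizon into the diagonal branch $y=w$ and the off-diagonal branch $y\neq w$ after the substitution $q=pw+f(y)$, in which $\varphi_0(y,p,w)=f(w)-pw-f(y)+py$. The diagonal branch is $L_0$, whose only degeneracies are the codimension-$2$ flips already handled by Propositions \ref{pro:02} and \ref{pro:03} and the theory of \cite{BV}, \cite{JG}; the ordinary cusps live exclusively on the off-diagonal branch. Moreover, on that branch the conditions $\varphi_0=(\varphi_0)'_y=(\varphi_0)''_{yy}=0$ (and the extra condition $(\varphi_0)'''_{yyy}=0$ to be excluded) couple the $3$-jets of $f$ at the two distinct points $y$ and $w$, so even there the single-jet Thom theorem does not apply: one must use the $2$-point multijet transversality theorem for $j^3_{(2)}f$ and the codimension-$2$ and codimension-$3$ submanifolds of $J^3_{(2)}(\mathbb R,\mathbb R)$ cut out by these relations, exactly as the paper does. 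In short, you invoke multijets only for the self-intersections, but they are equally indispensable for the cusp count, and the diagonal/off-diagonal dichotomy must be made explicit before any transversality argument is run.
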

\begin{proof}
The proof of this proposition is a straightforward consequence of
Multijet Transversality Theorem (see \cite{MD}). We will prove
only the first statement. The other statements are proved in an
analogous way.

From Preposition \ref{pro:03} it follows that the set of
singularities of $\Lambda_0$ contains a finite number of
codimension-$2$ flips. Now to show that the rest of singularities
of $\Lambda_0$ is constituted of a finite number of ordinary
cusps, we will do a new parametrization.

Consider the diffeomorphism $(y,p,q)\mapsto (y,p,pw+f(y))$. In the
new variables $(y,p,w)$ we have that $\varphi_0$ is written as
\begin{equation}
\label{eq:20} \varphi_0(y,p,w)=f(w)-pw-f(y)+py.
\end{equation}
Note that when $y=w$ we have the case studied in Proposition
\ref{pro:03} which correspond to curve $L_0$ given in
\eqref{eq:18}.

By \cite{BV}, to show that the rest of singularities of
$\Lambda_0$ consists on a finite number of ordinary cusps, we must
prove that for a generic function $f$, the set
\[
\mathcal{A}=\left\{(y,p,w):\; y\neq
w\;\mbox{and}\;\varphi_0=\frac{\partial \varphi_0}{\partial
y}=\frac{\partial^2 \varphi_0}{\partial y^2}=0\right\}
\]
is finite. Furthermore, if $(y,p,w)\in \mathcal A$ then
\[
\frac{\partial^3 \varphi_0}{\partial y^3}\left( \frac{\partial
\varphi_0}{\partial p}\frac{\partial^2 \varphi_0}{\partial
yw}-\frac{\partial \varphi_0}{\partial w}\frac{\partial^2
\varphi_0}{\partial yp}\right)\left|_{(y,p,w)}\right.\neq 0.
\]
In fact, as $w=(q-f(y))/p$ by \eqref{eq:20} it follows that
\begin{eqnarray}
\label{eq:21} \frac{\partial \varphi_0}{\partial y}(y,p,w) & = &
f'(w)f'(y)-p^2,
\\
\label{eq:22} \frac{\partial^2 \varphi_0}{\partial y^2}(y,p,w) & =
& f''(w)f'(y)^2-f'(w)f''(y)p
\end{eqnarray}
and
\begin{equation}
\label{eq:23} \begin{array}{ll}\displaystyle\frac{\partial^3
\varphi_0}{\partial y^3}(y,p,w)= &
f'''(w)f'(y)^3-3f''(w)f''(y)f'(y)p \\ &
+f'''(y)f'(w)p^2.\end{array}
\end{equation}
Note that if $(y,p,w)$ is a zero of $(\varphi_0)'_{y}$, then by
\eqref{eq:21} we have that $p=\sqrt{f'(w)f'(y)}$, remember that
$p>0$. Hence \eqref{eq:20}, \eqref{eq:22} and \eqref{eq:23}
becomes
\begin{eqnarray}
\label{eq:24} \varphi_0(y,p,w) & = &
f(w)-f(y)+\sqrt{f'(w)f'(y)}(y-w),
\\
\label{eq:25} \frac{\partial^2 \varphi_0}{\partial y^2}(y,p,w) & =
& f''(w)f'(y)^2-f'(w)f''(y)\sqrt{f'(w)f'(y)}
\end{eqnarray}
and
\begin{equation}
\label{eq:26}
\begin{array}{c}\displaystyle\frac{\partial^3
\varphi_0}{\partial y^3}(y,p,w)=  f'''(w)f'(y)^3
\\  -3f''(w)f''(y)f'(y)\sqrt{f'(w)f'(y)}
+f'''(y)f'(w)^2f'(y).\end{array}
\end{equation}
Now, consider the $2$-multijet of order $3$
\[
j^3_{(2)}f:\Delta_{(2)}(\mathbb R)\rightarrow J^3_{(2)}(\mathbb
R,\mathbb R)
\]
of $f$, where $\Delta_{(2)}(\mathbb R)\subset\mathbb R^2$ denote
the set of the par $(y,w)$ with $y\neq w$ and $J^3_{(2)}(\mathbb
R^2,\mathbb R)$ denote the space of $2$-multijets of order $3$ of
functions from $\mathbb R$ into $\mathbb R$ (see \cite{MD}). The
space $J^3_{(2)}(\mathbb R,\mathbb R)$ may be identified with a
subset of $\mathbb R^{10}$ and the $2$-multijet $j^3_{(2)}f$ with
the restriction of the map
\[
(y,w)\mapsto
(y,w,f(y),f(w),f'(y),f'(w),f''(y),f''(w),f'''(y),f'''(w)),
\]
from $\mathbb R^2$ into $\mathbb R^{10}$; then to prove that
$\mathcal A$ is finite and $(\varphi_0)'''_{yyy}\neq 0$ we apply
Multijet Transversality Theorem (see \cite{MD}) to the
submanifolds of codimensions $2$ and $3$ in $\mathbb R^{10}$
consisting (by \eqref{eq:24}, \eqref{eq:25} and \eqref{eq:26}) of
elements of the form $(y,w,a,b,c,d,e,g,k,l)$ with
$b-a+\sqrt{dc}(y-w)=0$, $gc^2-de\sqrt{dc}=0$ and
$b-a+\sqrt{dc}(y-w)=0$, $gc^2-de\sqrt{dc}=0$,
$lc^3-2gc\sqrt{dc}+kd^2c=0$ respectively. Denoting the respective
submanifolds by $V$ and $W$, since $\Delta_{(2)}(\mathbb R)$ has
dimension $2$, we have that for a generic function $f$,
$j^3_{(2)}f^{-1}(V)$ is a discrete set and $j^3_{(2)}f^{-1}(W)$ is
empty. In the analogous way we can prove that if $(y,p,w)\in
\mathcal A$ then $(
(\varphi_0)'_{p}(\varphi_0)''_{yw}-(\varphi_0)'_{w}(\varphi_0)''_{yp})(y,p,w)\neq
0$. This proves our claim and finishes the proof of the
proposition.
\end{proof}

Note that, by the Proposition \ref{pro:04} the apparent contour
$\Lambda_\delta$ of the surface $\varphi_\delta=0$ is
diffeomorphic to $\Lambda_0$. This follows from the fact that the
curve $\Lambda_0$ is structurally stable, i.e. the curve
$\Lambda_0$ does not change its topological structure by small
perturbations. Hence the next result follows directly (see Lemma
$8$, page $197$ of \cite{KS} for a similar result).
\begin{lemma}
The apparent contour $\Lambda_\delta$ of the surface
$\varphi_\delta=0$ for a generic function $f$ is diffeomorphic to
$\Lambda_0$ and the diffeomorphism \linebreak smoothly depends on
$\delta$ as $\delta\rightarrow 0$.
\end{lemma}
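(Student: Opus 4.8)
The plan is to deduce the lemma from the \emph{structural stability} of the curve $\Lambda_0$ together with the estimates on $\tilde{r}_2$ obtained in Section \ref{sec:03}. Fix $k$ large, say $k\geq 6$, and work on the compact box $K=[-1,1]\times[P_0,P_1]\times[-Q,Q]$; by the Corollaries of Section \ref{sec:03} everything relevant for $\Sigma_1$ and $\Sigma_2$ takes place over $K$. By the last Lemma of Section \ref{sec:03}, $\tilde{r}_2(\cdot,\delta,\cdot,\cdot)$ together with all its partial derivatives of order $\leq k$ tends to $0$ uniformly on $K$ as $\delta\to 0$, and $\tilde{r}_2$ depends $C^1$-smoothly on $\delta$ up to $\delta=0$. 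Hence $\varphi_\delta\to\varphi_0$ in the $C^k$-topology on $K$; each $\varphi_\delta$ (for $\delta>0$ small) has nonvanishing gradient wherever $\varphi_0$ does, so $\{\varphi_\delta=0\}$ is a regular surface $C^k$-close to $\{\varphi_0=0\}$, and its horizon and apparent contour $\Lambda_\delta$ are well defined, $C^1$-close to those of $\varphi_0$, and depend $C^1$-smoothly on $\delta$.

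The core of the argument is a local model at each point of $\Lambda_0$. By Proposition \ref{pro:04}, $\Lambda_0$ is a curve whose singularities are finitely many ordinary cusps and codimension-$2$ flips, whose self-intersections are transversal double points lying on smooth arcs, and whose endpoints are regular and are not self-intersections. Each of these features is defined by \emph{open} conditions on a finite jet of the defining function, so it persists for $\varphi_\delta$ when $\delta$ is small: (i) at a fold point (regular point of $\Lambda_0$) the horizon of $\{\varphi_\delta=0\}$ is cut out transversally and the implicit function theorem produces a local $C^1$ diffeomorphism, $C^1$ in $\delta$, carrying the corresponding arc of $\Lambda_\delta$ onto that of $\Lambda_0$; (ii) at an ordinary cusp the $2$-jet transversality used in the proof of Proposition \ref{pro:04} is stable and the cusp is a stable singularity of plane maps, so one again gets a local $C^1$ diffeomorphism depending $C^1$ on $\delta$; (iii) at a transversal self-intersection the transversality of the two branches is open, so $\Lambda_\delta$ crosses itself transversally at a nearby point and the two smooth branches are straightened onto those of $\Lambda_0$; (iv) at a codimension-$2$ flip $y_0\in A$, Proposition \ref{pro:03} writes $\varphi_0$, after a local change of coordinates, in the versal form with leading part $py+qy^3-y^5$, and since the fixed-point branch $L_\delta$ is still contained in $\{\varphi_\delta=0\}$ and in its horizon (fixed points of the first return are automatically double roots of \eqref{eq:15}), while the non-degeneracy conditions of Proposition \ref{pro:02} are open, the same normal form, with $\delta$-dependent coefficients that are $C^1$ in $\delta$, applies to $\varphi_\delta$; this reproduces the local picture of Figure \ref{figura2} and gives a local $C^1$ diffeomorphism onto the $\delta=0$ model, smoothly in $\delta$.

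To globalize, cover a neighborhood of $\Lambda_0$ in $K$ by finitely many small, pairwise disjoint balls centered at the (finitely many) singular and self-intersection points of $\Lambda_0$ — possible by Proposition \ref{pro:04}, since there are no triple points and the endpoints are regular — together with one open set on which $\Lambda_0$, and hence $\Lambda_\delta$ for small $\delta$, is an embedded $C^1$ arc without singularities, handled by a tubular-neighborhood normal form. Patch the local diffeomorphisms of the previous paragraph by a partition of unity subordinate to this cover; on the overlaps they differ by maps that are $C^1$-close to the identity (uniformly as $\delta\to 0$), so a standard patching argument yields a single $C^1$ diffeomorphism of a neighborhood of $\Lambda_0$ onto a neighborhood of $\Lambda_\delta$ taking $\Lambda_0$ onto $\Lambda_\delta$. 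Since all the $C^k$-norms of $\tilde{r}_2$ tend to $0$ with $\delta$, this diffeomorphism depends $C^1$-smoothly on $\delta$ and converges to the identity as $\delta\to 0$; this is precisely the assertion of the lemma, and the argument parallels Lemma $8$, p.~197 of \cite{KS}.

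The step I expect to be the main obstacle is (iv), the codimension-$2$ flip: one must verify that the perturbation $\tilde{r}_2$ does not destroy the special ``divided-by-$y$'' structure underlying Figure \ref{figura2} — that is, that $L_\delta$ persists as a branch of both $\{\varphi_\delta=0\}$ and its horizon, and that the versal deformation absorbs the $\tilde{r}_2$-term — which is exactly where the high-order ($C^k$ with $k\geq 5$) smallness of $\tilde{r}_2$ from the last Lemma of Section \ref{sec:03}, and not merely its $C^1$-smallness, is needed.
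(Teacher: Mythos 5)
Your argument is correct and is essentially the paper's: the paper proves this lemma by simply observing that $\Lambda_0$ is structurally stable in view of Proposition \ref{pro:04} and pointing to Lemma 8, p.~197 of \cite{KS}, which is exactly the local-models-plus-patching scheme (folds, cusps, transversal double points, codimension-$2$ flips, then globalization) that you carry out explicitly, using the $C^k$-smallness of $\tilde{r}_2$ from Section \ref{sec:03}. One cosmetic slip in your step (iv): it is not arbitrary fixed points of the first return but the points of $C_\delta$, where additionally $\Delta_\nu'(y)=-1$, that are double roots of \eqref{eq:15}; the persistence of the branch $L_\delta$ that you actually need is supplied by Theorem \ref{the:03}.
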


Thus, as in the previous section we have a similar theorem that
characterize the shape of the surface $\Sigma_2$.

\begin{theorem}
\label{the:02} The surface $\Sigma_2$ is a horn in $\mathbb
R^3_{(\epsilon,\delta,\lambda)}$ defined in the following way.
Consider in the half plane $\mathbb R^2_{(p,q)}\cap \{p>0\}$ the
trace of curve $\Lambda_0\setminus L_0$ and the embedding of
$\mathbb R^2_{(p,q)}$ into $\mathbb R^2_{(\delta,p,q)}$ as part of
the plane $\delta =0$. Let $Z_2$ be the cylinder in $\Phi(V^+)$
over $\Lambda_0\setminus L_0$ with the axis parallel to $\delta$
axis of the height $\delta_0$. Then for sufficiently small
$\delta_0$ the ``blown-up horn''
\[
Z=\Phi(\Sigma_1\cap V^+)
\]
is diffeomorphic to $Z_2$. The diffeomorphism taking $Z$ into
$Z_1$ preserve the foliation $\delta=$const, is $C^1$-smooth in
$\delta$, and its difference from the identity map on the fiber
$\delta=$const is if order $O(\delta)$.
\end{theorem}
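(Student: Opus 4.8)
The plan is to follow the proof of Theorem \ref{the:03} almost verbatim, the only difference being that the smooth embedded arc $L_0$ is there replaced by the full apparent contour $\Lambda_0$, whose more complicated shape — a finite set of ordinary cusps and codimension-$2$ flips, with transversal, isolated self-intersections lying on smooth arcs and no singularity at the endpoints — has already been pinned down in Propositions \ref{pro:02}, \ref{pro:03} and \ref{pro:04}. First I would record that, in the parameters $(\delta,p,q)$, the bifurcation surface $\Sigma$ is by construction the projection to $\mathbb R^3_{(\delta,p,q)}$ of $\{\varphi_\delta=0,\ (\varphi_\delta)'_y=0\}$, so that its slice $\{\delta=\mathrm{const}\}$ is exactly the apparent contour $\Lambda_\delta$. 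The part of that slice arising from roots of the first-return equation \eqref{eq:14} is the curve $L_\delta$, which is precisely the $\delta$-slice of $\Sigma_1$; hence the $\delta$-slice of $\Sigma_2$ is $\Lambda_\delta\setminus L_\delta$, and by Proposition \ref{pro:04} this is a well-defined structurally stable curve, so the cylinder over it is unambiguous.

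Next I would invoke the lemma immediately preceding the statement: for $\delta$ small, $\Lambda_\delta$ is diffeomorphic to $\Lambda_0$ by a diffeomorphism that depends smoothly on $\delta$ and tends to the identity as $\delta\to 0$. Combined with the uniform convergence to $0$ of $\tilde r_2$ and its partial derivatives up to order $k$, this gives that on each fiber $\{\delta=\mathrm{const}\}$ the diffeomorphism differs from the identity by $O(\delta)$. Letting $\delta$ run over $[0,\delta_0)$ and restricting each slice to $\Lambda_\delta\setminus L_\delta$, one obtains a subset of $\mathbb R^{3+}_{(\delta,p,q)}$ that is, fiberwise and $O(\delta)$-close to the identity, diffeomorphic to the cylinder $Z_2=[0,\delta_0)\times(\Lambda_0\setminus L_0)$; by the corollary of Lemma \ref{lem:01} giving a compact $D\subset\mathbb R^{2+}_{(p,q)}$ with $\Sigma_1\cup\Sigma_2\subset\Phi^{-1}(Z_D)$, one checks that for $\delta_0$ small this subset lies in $Z_D\subset\Phi(V^+)$ and equals $\Phi(\Sigma_2\cap V^+)$.

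Finally I would push everything back to the original parameters. By the corollary of Lemma \ref{lem:01} describing $\Phi^{-1}(Z_D)$, the map $\Phi^{-1}$ sends $Z_D$ diffeomorphically onto a narrow horn with vertex at $\nu=0$ tangent to the line $\delta=\epsilon$, $\lambda=0$; applying $\Phi^{-1}$ to $\Phi(\Sigma_2\cap V^+)$ then exhibits $\Sigma_2$ as the horn described in the statement, with the foliation-preserving, $C^1$-in-$\delta$, $O(\delta)$ properties inherited from the previous step. The step I expect to be the real obstacle is the clean bookkeeping that separates the slice $\Lambda_\delta$ into its $L_\delta$ piece (already treated as $\Sigma_1$) and its $\Lambda_\delta\setminus L_\delta$ piece, uniformly in $\delta$ and without losing or doubling a component near the self-intersections of $\Lambda_0$ or near the endpoints $y=\pm1$ of $\Gamma^+_1$; this is exactly where Proposition \ref{pro:04} does the work, since the transversality and isolation of the self-intersections and the absence of singularities and self-intersections at the endpoints are what make the decomposition — and hence the cylinder $Z_2$ — well defined and stable under the $O(\delta)$ perturbation.
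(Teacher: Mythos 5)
Your proposal is correct and coincides with the paper's intended argument: the paper gives no separate proof of Theorem~\ref{the:02}, stating only that it follows ``as in the previous section,'' i.e.\ by repeating the proof of Theorem~\ref{the:03} with $L_0$ replaced by $\Lambda_0\setminus L_0$, using Propositions~\ref{pro:02}--\ref{pro:04}, the lemma on $\Lambda_\delta\cong\Lambda_0$, and Lemma~\ref{lem:01}. That is exactly the route you take, and your remark that Proposition~\ref{pro:04} is what makes the splitting of $\Lambda_\delta$ into $L_\delta$ and $\Lambda_\delta\setminus L_\delta$ stable under the $O(\delta)$ perturbation correctly identifies the only delicate point.
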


\section{Bifurcation diagram for the lips on a M\"obius band}
\label{sec:06}

In this section we give the precise local description of the
bifurcation diagram for the equation \eqref{eq:15}. As it was
explained in Section \ref{sec:01}, this diagram consists of four
parts, $\Sigma_1$, $\Sigma_2$, $\Sigma_+$ and $\Sigma_-$.

\begin{theorem}
\label{the:08} Let $\displaystyle \varphi_\delta(y,p,q)$ be the
map defined in Section \ref{sec:05}.

\begin{enumerate}

\item The surface $\Sigma=\Sigma_1\cup\Sigma_2$ is a horn in
$\mathbb R^3_{(\epsilon,\delta,\lambda)}$ defined in the following
way. Consider in the half plane $\mathbb R^2_{(p,q)}\cap \{p>0\}$
the apparent contour $\Lambda_0$ of surface $\varphi_0=0$ and the
embedding of $\mathbb R^2_{(p,q)}$ into $\mathbb
R^3_{(\delta,p,q)}$ as part of the plane $\delta =0$. Let
$Z_{\Lambda_0}$ be the cylinder in $\Phi(V^+)$ over $\Lambda_0$
with the axis parallel to $\delta$ axis of height $\delta_0$. Then
for sufficiently small $\delta_0$ the ``blown-up horn''
\[
Z=\Phi(\Sigma\cap V^+)
\]
is diffeomorphic to $Z_{\Lambda_0}$. The diffeomorphism taking $Z$
into $Z_{\Lambda_0}$ preserve the foliation $\delta=$const, is
$C^1$-smooth in $\delta$, and its difference from the identity map
on the fiber $\delta=$const is of order $O(\delta)$ (see
Figure~\ref{figura03}).

\item Consider in the half plane $\mathbb R^3_{(\delta,p,q)}\cap\{
p>0\; \mbox{and}\; \delta=\mbox{const}\}$ the curves
$l^\pm_\delta$ determined by the equations $\varphi_\delta(\pm
1,p,q)=0$. In the coordinates $(\delta, p, q)$, the boundary
surfaces $\Sigma_{+}$ and $\Sigma_{-}$ are regular surfaces
diffeomorphic to cylinders in $\Phi(V^+)$ over $l^\pm_0$ with the
axis parallel to $\delta$ axis of height $\delta_0$ (for
sufficiently small $\delta_0$). This diffeomorphism preserve the
foliation $\delta=$const, is $C^1$-smooth in $\delta$, and its
difference from the identity map on the fiber $\delta=$const is of
order $O(\delta)$.

\item The intersection of the boundary of $\Sigma$ with the layer
$0< \delta < \delta_{0}$ belongs to $\Sigma_{+}$ and $\Sigma_{-}$.
At points of this intersection, the surface $\Sigma$ is tangent to
either $\Sigma_{+}$ or $\Sigma_{-}$.

\end{enumerate}
\end{theorem}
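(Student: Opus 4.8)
The plan is to assemble the three parts of Theorem~\ref{the:08} from the machinery already in place, principally Theorems~\ref{the:03} and \ref{the:02}, Lemma~\ref{lem:01} and its corollaries, and Propositions~\ref{pro:02}--\ref{pro:04}. For part~(1), recall that $\Lambda_0$ is the apparent contour of $\varphi_0=0$ and that, by the observation following Proposition~\ref{pro:04}, $\Lambda_0 = L_0 \cup (\Lambda_0\setminus L_0)$, where $L_0$ is the piece of the horizon coming from the first-return equation \eqref{eq:14} (parametrized by \eqref{eq:18}) and $\Lambda_0\setminus L_0$ is the piece coming from genuine multiple roots of the second-return equation \eqref{eq:15}. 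First I would recall that $\Sigma_1 = \Phi^{-1}(Z_1)$ with $Z_1$ the $\delta$-cylinder over $L_0$ (Theorem~\ref{the:03}) and $\Sigma_2 = \Phi^{-1}(Z_2)$ with $Z_2$ the $\delta$-cylinder over $\Lambda_0\setminus L_0$ (Theorem~\ref{the:02}). Taking the union gives $\Sigma = \Sigma_1\cup\Sigma_2 = \Phi^{-1}(Z_1\cup Z_2)$, and since $Z_1\cup Z_2$ is precisely the $\delta$-cylinder $Z_{\Lambda_0}$ over $\Lambda_0$, part~(1) follows once one checks that the two fiberwise $C^1$ diffeomorphisms supplied by Theorems~\ref{the:03} and \ref{the:02}, each $O(\delta)$-close to the identity and each preserving the foliation $\delta=\mathrm{const}$, glue into a single such diffeomorphism on $Z_{\Lambda_0}$. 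The gluing is legitimate because on $\delta=0$ both reduce to the identity on the common trace $\Lambda_0$ in $poq^+$, and along the shared arcs the $O(\delta)$ estimates are uniform; one must only note that the parametrized apparent contour $\Lambda_\delta$ depends $C^1$ on $\delta$ (the Lemma preceding Theorem~\ref{the:02}), so the glued map is well defined and $C^1$ in $\delta$.

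For part~(2), the boundary bifurcations: a root of \eqref{eq:15} escapes through $y=\pm 1$ exactly when $\varphi_\delta(\pm1,p,q)=0$, which for each fixed $\delta$ defines the curves $l^\pm_\delta$ in the plane $\{p>0,\ \delta=\mathrm{const}\}$. I would verify that $\varphi_0(\pm1,p,q)=0$ is a regular curve in $\mathbb R^2_{(p,q)}$: indeed $\partial\varphi_0/\partial q\,(\pm1,p,q) = 1 - \frac1p f'\!\bigl(-\tfrac1p f(\pm1)+\tfrac qp\bigr) \neq 0$ for $(p,q)$ in the relevant compact $D$ once $P_0,P_1$ are chosen as in the corollary of Lemma~\ref{lem:01} (since there $f'<p$ along $L_0$-type points but one checks the gradient is nonzero on all of $l^\pm_0$), so the implicit function theorem gives a smooth curve; for $\delta$ small the perturbation term $\tilde r_2(\pm1,\delta,p,q)$ is $C^1$-small by the Lemma in Section~\ref{sec:03}, so $l^\pm_\delta$ persists as a regular curve depending $C^1$ on $\delta$. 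Then $\Sigma_\pm = \Phi^{-1}(\text{cylinder over } l^\pm_0)$ by the same blow-down argument as in part~(1), using Lemma~\ref{lem:01}(c)--(d) and its corollary to transfer the cylinder structure through $\Phi^{-1}$ into a horn in $\mathbb R^3_{(\epsilon,\delta,\lambda)}$, with the $O(\delta)$-closeness inherited from the $C^1$-smallness of $\tilde r_2$.

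For part~(3), the tangency statement: the boundary of $\Sigma$ in the layer $0<\delta<\delta_0$ corresponds, for each $\delta$, to the endpoints of the curve $\Lambda_\delta$, and these endpoints are by construction the points where $\Lambda_\delta$ meets $l^+_\delta$ or $l^-_\delta$ — namely, a multiple root of \eqref{eq:15} that additionally sits at $y=\pm1$. So at such a point we have simultaneously $\varphi_\delta=\partial_y\varphi_\delta=0$ and $y=\pm1$. The claim that $\Sigma$ is tangent to $\Sigma_+$ (resp.\ $\Sigma_-$) there is a statement about the $(p,q)$-plane traces: $\Lambda_\delta$ is the projection of the curve $\{\varphi_\delta = \partial_y\varphi_\delta=0\}$, while $l^\pm_\delta$ is the projection of $\{\varphi_\delta=0,\ y=\pm1\}$; at a common point the two projected curves have the same tangent because the differential of the projection $(y,p,q)\mapsto(p,q)$ restricted to $\{\varphi_\delta=0\}$ sends both the direction $\partial_y\varphi_\delta=0$ and the direction $y=\pm1$-constant to the same line in $T(p,q)$-space — more carefully, on the surface $\{\varphi_\delta=0\}$ near such a point, the level set $\{y=\pm1\}$ and the fold locus $\{\partial_y\varphi_\delta=0\}$ are both curves through the point, and their images under the fold projection are tangent precisely because at a fold point the projection's kernel is $\partial_y$, forcing both curves (which differ by a vector with a nonzero $\partial_y$-component only) to map to curves with identical velocity. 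I expect this last point — making the tangency precise, rather than just plausible — to be the main obstacle: one must rule out that the common point is itself a singularity of $\Lambda_\delta$ (cusp or codimension-2 flip), which is where Proposition~\ref{pro:04} is used (no self-intersections or singularities at endpoints of $\Lambda_0$, hence by structural stability none for $\Lambda_\delta$), and then carry out the fold-projection computation showing the shared tangent. The rest is bookkeeping: transporting everything through $\Phi^{-1}$ via Lemma~\ref{lem:01} to phrase the conclusion in the original coordinates $(\epsilon,\delta,\lambda)$.
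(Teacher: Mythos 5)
Your treatment of part (2) contains a genuine gap, and it sits at exactly the point you dismiss parenthetically. You propose to get regularity of $l^\pm_0$ from the implicit function theorem via $\partial\varphi_0/\partial q\,(\pm1,p,q)=1-\tfrac1p f'\bigl(\tfrac{q-f(\pm1)}{p}\bigr)\neq 0$, hedged by ``one checks the gradient is nonzero on all of $l^\pm_0$''. That check fails. Writing $w=(q-f(1))/p$, one has $\partial_q\varphi_0(1,p,q)=1-f'(w)/p$ and $\partial_p\varphi_0(1,p,q)=-1+wf'(w)/p$, so \emph{both} partials vanish at $w=1$, $p=f'(1)$, i.e.\ at $(p,q)=(f'(1),f'(1)+f(1))$ --- and this point lies on $\{\varphi_0(1,\cdot,\cdot)=0\}$ and inside your compact $D$ (since $P_0<f'(1)<P_1$). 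Worse, the zero set is genuinely not a $1$-manifold there: in the variables $(p,w)$ one has $\varphi_0(1,p,q)=-\bigl(f(w)-pw-f(1)+p\bigr)=-(w-1)\bigl(\int_0^1 f'(1+s(w-1))\,ds-p\bigr)$, a product of two factors whose zero loci (the line $w=1$, i.e.\ $q=p+f(1)$, where $y=1$ is a \emph{fixed} point, and the graph $p=\int_0^1 f'(1+s(w-1))\,ds$ of genuine boundary period-two points) cross transversally at that very point. No implicit-function-theorem argument in $(p,q)$ can produce a single regular curve there; one must first perform the substitution $q=pw+f(y)$ and factor, keeping the graph component $p=g(w)$ as the curve $l_0^+$ --- which is precisely what the paper does via equation \eqref{eq:29}. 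Note also that the offending point $(f'(1),f'(1)+f(1))$ is the endpoint of $L_0$ and the very point of tangency in part (3), so the gap cannot be waved away as affecting only an irrelevant corner of the picture.

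The rest is essentially sound. Part (1) is, as in the paper, just the union of Theorems \ref{the:03} and \ref{the:02}; your remarks on gluing the two fiberwise diffeomorphisms along $\delta=0$ are a reasonable elaboration of what the paper leaves implicit. For part (3) you give a coordinate-free fold-projection argument (the kernel of $d\pi$ restricted to $\{\varphi_\delta=0\}$ at a fold point is the $\partial_y$ direction, so both the critical locus and the slice $\{y=\pm1\}$ project to curves tangent to the same line), correctly invoking Proposition \ref{pro:04} to ensure the endpoint is a fold and not a cusp or flip; this is a legitimate geometric rephrasing of the paper's computation, which instead passes to the variables $(y,p,w)$ and checks that $dp/dw$ along $l^\pm_\delta$ and along the smooth branches of $\Lambda_\delta$ are given by the same ratio $(\varphi_\delta)'_w/(\varphi_\delta)'_p$ because $(\varphi_\delta)'_y=0$ on the horizon. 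Once part (2) is repaired by the factorization above, your part (3) goes through.
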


\begin{proof} Let $\displaystyle \varphi_\delta(y,p,q)$ be the map defined
in the previous section.

The first statement of theorem follows from Theorems \ref{the:03}
and \ref{the:02}.

{For} to prove the second assertion of the theorem we consider
only the surface $\Sigma_{+}$. The study of the surface
$\Sigma_{-}$ is exactly analogous. The surface $\Sigma_{+}$ is
described by the equation
\[
\varphi_\delta(1,p,q)=0.
\]
Now, consider the diffeomorphism $(y,p,q)\mapsto (y,p,pw+f(y))$.
In the new variables $(y,p,w)$ we have that the equation
$\varphi_0(1,p,q)=0$ is written as
\begin{equation}
\label{eq:29} f(w)-pw-f(1)+p=0.
\end{equation}
We can write equation \eqref{eq:29} as
\[
p=\int^{1}_{0}f'(1+s(w-1))ds.
\]
Thus the curve determined by equation \eqref{eq:29} is the graphic
of a function of the forme $p=g(w)$. Hence the curve $l_0^+$ is
regular and has no self-intersections. Therefore for a fixed
$\delta$ ($0<\delta<\delta_0$), the curve determined by the
equation $\varphi_\delta(1,p,q)=0$ is regular and is diffeomorphic
to $l_0^+$. This prove the second statement of the theorem.

The values of the parameters for which the equation of limit
cycles has a multiple root at $y=\pm 1$ belong both to the
boundary of $\Sigma$ and to one of surfaces $\Sigma_\pm$. Let us
show that at that point the surfaces are tangent to each other.
Consider the case $y=1$, the other case is analogous. The previous
claim is equivalent to saying that the surface $Z$ is tangent to
$\Phi (\Sigma_+)$. It is sufficient to show that the apparent
contour $\Lambda_\delta$ ($0<\delta <\delta_0$) of the surface
$\varphi_\delta=0$ is tangent to the curve $\Phi (\Sigma_+)\cap
\{\delta=\mbox{const}\}$ at the endpoint of $\Lambda_\delta$
corresponding to $y=1$. In the variables $(y,p,w)$, the curve
$\Phi (\Sigma_+)\cap \{\delta=\mbox{const}\}$ is described by the
equation
\[
\varphi_\delta(1,p,w)=0.
\]
Therefore everywhere on such curve we have
\[
\frac{dp}{dw}=\frac{(\varphi_\delta)'_w(1,p,w)}{(\varphi_\delta)'_p(1,p,w)}.
\]
Note that $(\varphi_\delta)'_p(1,p,w)\neq 0$. On the similar way,
in the variables $(y,p,w)$, the smooth parts of $\Lambda_\delta$
are graphs of smooth functions of the form $p=h(w)$, where
\[
h'(w)=\frac{(\varphi_\delta)'_w(y,p,w)}{(\varphi_\delta)'_p(y,p,w)}.
\]
This means that the two curves touch each other, hence the
surfaces $\Sigma$ and $\Sigma_+$ are tangent. Thus the proof of
this theorem is complete. 
\end{proof}

\begin{figure}[ptb]
\begin{center}
\psfrag{D}[l][B]{$\delta$} \psfrag{Z}[l][B]{$Z_{\Lambda_0}$}
\psfrag{Q}[l][B]{$q$} \psfrag{P}[l][B]{$p$} \psfrag{F}[l][B]{$\Phi
(\Sigma\cap V^{+})$} \psfrag{G}[l][B]{$\Lambda_0 $}
\includegraphics[height=2in,width=3in]{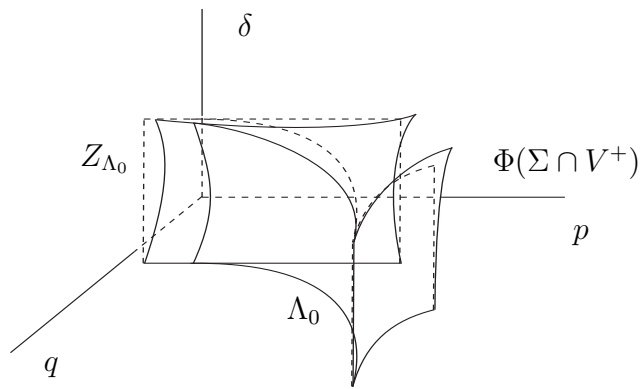}
\end{center}
\caption{The bifurcation horn.} \label{figura03}
\end{figure}

\begin{remark} The local behavior of the number of limit cycles in the
bifurcation diagram established in theorem \ref{the:08} is made
more explicit is as follows.

 We can consider in the parameter
space $(\delta,p,q)$ only the curves determined by the
intersection of the bifurcation surfaces with the plane
$\delta=$constant, i.e the curves $\Lambda_\delta$ and
$l_\delta^\pm$.

 Near a cuspidal point of $\Lambda_\delta$ that
corresponds to a root of multiplicity $3$ of the equation
$\varphi_\delta=0$, the number of limit cycles decreases by $2$
when passing from the inside  to the outside of the cusp (see
Figure~\ref{figura3} $(a)$).

Near the endpoints of the curve $\Lambda_\delta$, i.e. at the
intersection points of $\Lambda_\delta$ with  the curves
$l_\delta^\pm$ that correspond to a double root at an extreme
point $y=\pm 1$, there are three local connected components of
$\mathbb R^2_{(p,q)}\setminus \{\Lambda_\delta\cup
l_\delta^\pm\}$; one is a piece of half space, the other is
locally convex and the third one is a ``thin" horn. The number of
limit cycles decreases by $1$  when we move from the ``thin"
component to the half space as well as  when we go from the half
space  to the locally convex component (see Figure~\ref{figura3}
$(c)$).

Near a flip point of $\Lambda_\delta$ that corresponds to a root
of multiplicity $5$ of  $\varphi_\delta=0$, the behavior is
analogous to the one just described (see Figure~\ref{figura3}
$(b)$).
\end{remark}

\begin{figure}[ptb]
\begin{center}
\psfrag{N}[l][B]{$n$} \psfrag{N1}[l][B]{$n+1$}
\psfrag{N2}[l][B]{$n+2$} \psfrag{L}[l][B]{$l_\delta^\pm$}
\psfrag{Ld}[l][B]{$\Lambda_\delta$} \psfrag{L1}[l][B]{$L_\delta$}
\psfrag{Ld1}[l][B]{$\Lambda_\delta\setminus L_\delta$}
\psfrag{A}[l][B]{$(a)$} \psfrag{B}[l][B]{$(b)$}
\psfrag{C}[l][B]{$(c)$}
\includegraphics[height=1in,width=3in]{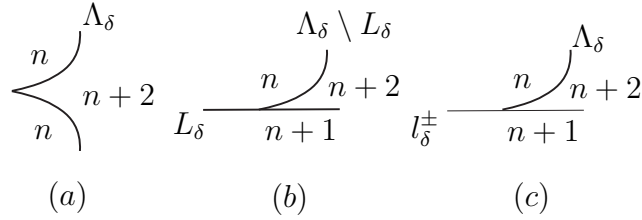}
\end{center}
\caption{Bifurcation of limit cycles near cuspidal singularities,
flip points and endpoints of $\Lambda_\delta$. Here $n$ indicate
the number of limit cycles.} \label{figura3}
\end{figure}

\section{Finite Cyclicity on a M\"obius lips}
\label{sec:fc}

{\it Cyclicity} of a polycycle in a family of vector fields
depending of parameters is the maximal number of limit cycles
generated by this polycycle and corresponding to a parameter value
close to the one with the polycycle.

Let $f$ and $g$ be two smooth real functions. We say that $f$ and
$g$ are {\it affine equivalent} if there exist an affine function
$\alpha(x)=b_1x+b_2$ such that $f\circ \alpha=\alpha^{-1}\circ g$.

The study of the cyclicity of the lips in the M\"obius band is
similar the study of two subsequent lips, i.e. with four
saddle-nodes. This can be seen by considering the double
orientable covering of the M\"obius band. In \cite{DIR} the
cyclicity of a polycycle that belongs to two subsequent lips is
determined. We can use a similar idea to obtain the following
result.

\begin{theorem}
\label{the:l}
Let $X_\mu$, $\mu \in \mathbb R^3$, be a generic $C^{\infty}$
$3$-parameter family of vector fields in a M\"obius band $M^2$,
such that $X_0$ has a set of polycycles of lips type. Consider the
family of diffeomorphisms $f_\nu$ of class $C^k$ defined in \eqref{eq:01}.
Suppose that $f_{0}$ and $f^{-1}_0$ satisfies the following
generic conditions at some point $y_0$:
\begin{itemize} \item[(i)] the jets $J^n_{y_0}f_0$, $J^n_{y_0}f^{-1}_0$ of $f_0$ and $f^{-1}_0$ at the
point $y_0$ are nonaffine maps to some order $n\leq k$,
\item[(ii)] the jets $J^n_{y_0}f_0$, $J^n_{y_0}f^{-1}_0$ of $f_0$
and $f^{-1}_0$ at the point $y_0$ are not affine equivalent
through orientation reversing affine maps for some order $n\leq
k$.
\end{itemize}
Then the polycycle of the lips passing through a point $y_0$ has
cyclicity $\leq n$, where $n\leq k$ is the minimal order of the
jet of the two functions $f_0$ and $f_0^{-1}$ at the point $y_0$
on which we can check the genericity conditions $(i)$ and $(ii)$.
\end{theorem}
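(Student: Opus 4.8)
The plan is to reduce the cyclicity question for the M\"obius lips to a cyclicity question for a composition of two saddle-node transition maps with the regular transition $f_\nu$, exactly as in the orientable two-lips setting of \cite{DIR}, and then invoke the finite-cyclicity criterion proved there. First I would set up the double orientable covering $\widetilde{M}^2 \to M^2$. Under this covering the single polycycle on the M\"obius band through $y_0$ lifts to a polycycle with four saddle-nodes (two copies of $O_1$ and two of $O_2$, alternating attracting/repelling), and the Poincar\'e map $\Delta_\nu$ on $\Gamma^+_1$ has the property that $\Delta_\nu^2$ is conjugate to the first-return map of the lifted polycycle on the cover. Thus limit cycles of $X_\nu$ near the lips correspond to fixed points of $\Delta_\nu$ (two-sided cycles) together with $2$-periodic orbits of $\Delta_\nu$ (one-sided cycles), and both are captured by the zeros of $\Delta_\nu^2(y)-y$, i.e.\ of equation \eqref{eq:15}. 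So the cyclicity of the polycycle is bounded by the number of isolated roots of \eqref{eq:15} on a neighborhood of $y_0$ in $[-1,1]$, counted over all small $\nu$.

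Next I would bring the return map into the normalized form already available in the paper: by the admissible change of coordinates of Section~4 and the reparametrization of Sections~5--6, the equation for $2$-periodic points near $y_0$ becomes
\[
-py+q = f\!\left(-\tfrac{1}{p}f(y)+\tfrac{q}{p}\right) + \tilde r_2(y,\delta,p,q),
\]
with $f=f_0$ and $\tilde r_2\to 0$ in $C^k$ as $\delta\to 0$. The number of roots of this equation for $(\delta,p,q)$ ranging over the blown-up parameter domain is, up to the uniformly small perturbation $\tilde r_2$, governed by the geometry of the surface $\varphi_0=0$ and its apparent contour $\Lambda_0$; Propositions~\ref{pro:02}--\ref{pro:04} already show that, for generic $f$, $\Lambda_0$ has only ordinary cusps and codimension-$2$ flips as singularities, with transversal self-intersections, so locally near $y_0$ the multiplicity of a root of \eqref{eq:15} is uniformly bounded. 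The key point to make is that the generic conditions (i) and (ii) on the $n$-jets of $f_0$ and $f_0^{-1}$ at $y_0$ are precisely what is needed to guarantee that the relevant bifurcation function $G(y,p,q)=py+qy^3-\dots$ obtained by the flip normal form near $y_0$ has finite codimension $\le n$: condition (i) (nonaffine $n$-jet) rules out $f_0$ and $f_0^{-1}$ being simultaneously linear, so the composed map in \eqref{eq:13} is not the identity after any affine conjugacy; condition (ii) (not affine equivalent through an orientation-reversing affine map) is exactly the obstruction to a resonance that would force the flip to be of infinite codimension, i.e.\ it guarantees the nonvanishing of the first nonzero higher jet coefficient, which in the worst case is controlled at order $n$. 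I would then quote the Khovanskii/Il\cdot yashenko-type finiteness argument of \cite{DIR}: a $C^k$ deformation of a function whose $n$-jet is finitely determined (in the Mather sense, with the symmetry constraint coming from the orientation reversal) has a uniformly bounded number of preimages of any point near $y_0$, the bound being $n$. Assembling these pieces gives cyclicity $\le n$.

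The main obstacle I anticipate is the careful bookkeeping at the interface between the two descriptions: on the one hand the global surfaces $\Sigma_1,\Sigma_2$ and the horn picture of Theorem~\ref{the:08}, which handle the ``generic'' part of the parameter space where $f$ itself is generic; on the other hand the \emph{local} statement at a specific point $y_0$ where only a finite jet of $f_0$ (and of $f_0^{-1}$) is assumed generic, not $f$ globally. I would need to argue that the finiteness of the multiplicity of roots of \eqref{eq:15} near $y_0$ depends only on the $n$-jets of $f_0,f_0^{-1}$ at $y_0$ — this is where conditions (i) and (ii) replace the global genericity of $f$ used in Propositions~\ref{pro:02}--\ref{pro:04} — and that the remainders $\tilde r_2$ and their first $n$ derivatives, being uniformly small as $\delta\to 0$, cannot increase the root count beyond the bound dictated by the leading $n$-jet (a Weierstrass-preparation / division argument applied to $\varphi_\delta$ near $y_0$). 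Once this localization is justified, the orientation-reversing nature of the gluing — responsible for having to consider $2$-periodic points and codimension-$1$ and $2$ flip bifurcations at all — enters only through the symmetry constraint in condition (ii), and the cyclicity bound $n$ follows as in \cite{DIR}.
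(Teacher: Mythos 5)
Your reduction of the cyclicity question to counting roots of \eqref{eq:11} (equivalently \eqref{eq:15}) near $y_0$ is the right starting point, and it is where the paper begins too. But the core of your argument --- that conditions (i) and (ii) force the bifurcation function to have ``finite codimension $\le n$'' via the flip normal form, after which a Khovanskii/Ilyashenko-type finiteness statement gives the bound --- contains a genuine gap. The flip normal form $py+qy^3-y^5+\cdots$ of Proposition \ref{pro:03} is established only under \emph{global} genericity of $f$ (via the transversality arguments of Propositions \ref{pro:02} and \ref{pro:04}) and only at the isolated codimension-$2$ flip points of $\Lambda_0$; it says nothing about an arbitrary point $y_0$ at which merely the $n$-jets of $f_0$ and $f_0^{-1}$ satisfy (i) and (ii), and it cannot produce a bound depending on the order $n$ appearing in the hypotheses. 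You flag this mismatch yourself as ``the main obstacle,'' but you do not resolve it: nowhere do you exhibit the concrete quantity whose nonvanishing is guaranteed by (i) and (ii), and without that the claimed bound $n$ does not follow.

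The paper closes this gap by a direct computation that bypasses the double cover, the apparent contour, and the flip normal form entirely. Rewrite \eqref{eq:11} as $f_\nu^{-1}(-py+q)=-\frac{1}{p} f_\nu(y)+\frac{q}{p}$ and set $V_{(p,q)}(y)=f_0^{-1}(-py+q)+\frac{1}{p} f_0(y)-\frac{q}{p}$, taking $y_0=0$. Then
\[
V_{(p,q)}^{(m)}(0)=(-p)^m (f_0^{-1})^{(m)}(q)+\frac{1}{p} f_0^{(m)}(0),
\]
and the simultaneous vanishing of these for $m=1,\dots,n$ (together with $V_{(p,q)}(0)=0$) is exactly the statement that the $n$-jets of $f_0^{-1}$ and $f_0$ at $y_0$ are affine equivalent through the orientation-reversing affine map $y\mapsto -py+q$, which hypothesis (ii) forbids, while (i) rules out the degenerate affine situation. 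Hence $V_{(p,q)}^{(n)}(0)\neq 0$ uniformly for $p>0$ and $q$ small, and since $V_\nu$ is $C^k$-close to $V_{(p,q)}$ as $\nu\to 0$, a Rolle-type argument shows that $V_\nu=0$ has at most $n$ roots near $y_0$. If you wish to keep your outline, you must supply precisely this identification of hypotheses (i) and (ii) with the uniform nonvanishing of a derivative of order at most $n$ of the displacement function; the appeal to the flip normal form and to finite determinacy does not substitute for it.
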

\begin{proof}
By \eqref{eq:11}, we can write the equation that determine the
limit cycles in the following form
\[
f_\nu^{-1}(-py+q)=-\frac{1}{p}f_\nu(y)+\frac{q}{p}.
\]
Now, we define the displacement map $V_\nu:\Gamma^+_1\rightarrow
\mathbb R$ by $V_\nu(y)=\displaystyle
f_\nu^{-1}(-py+q)+\frac{1}{p}f_\nu(y)-\frac{q}{p}$.

Without loss of generality we can suppose that $y_0=0$. Then, we
will prove that for some $n\leq k$, $V_\nu^{(n)}(0)\neq 0$ for all
$\nu\neq 0$ small enough.

Consider the map
\[
V_{(p,q)}(y)=\displaystyle
f_0^{-1}(-py+q)+\frac{1}{p}f_0(y)-\frac{q}{p}
\]
with $p>0$. By the hypotheses of the theorem the system of
equations
\[
V_{(p,q)}^{(m)}(0)=\displaystyle
(-p)^m(f_0^{-1})^{(m)}(q)+\frac{1}{p}f_0^{(m)}(0)=0,\;\;\; m=1,\ldots, k,
\]
has no solutions on the variables $p$, $q$. Therefore, there exist
$n\leq k$ such that $V_{(p,q)}^{(n)}(0)\neq 0$ for all $p>0$ and
$q$ sufficiently small. Hence $V_{\nu}^{(n)}(0)\neq 0$ for all
$\nu\neq 0$ small enough. This implies that in a small
neighborhood of the polycycle passing by $y_0$ at most $n$ limit
cycles can bifurcate from it. The theorem is proved.
\end{proof}



The proof of Theorem \ref{the:l} is inspired in Theorem $3.3$ and Main Lemma
$3.4$ of \cite{DIR}.

\newpage
\addcontentsline{toc}{chapter}{Bibliografia}


\begin{thebibliography}{6}

\bibitem{ref:7} {\sc A.A. Andronov, E.A. Leontovich, I.I. Gordon and A.G. Maier},
Qualitative theory of second-order dynamic systems, John Wiley \&
Sons, $1973$.

\bibitem{BV}{\sc F. Balibrea and J. C. Valverde}, Cusp and
generalized flip bifurcations under higher degree conditions, {\it
Nonlinear Analysis} {\bf 52}, (2003), 405--419.

\bibitem{MD} M. Demazure, {\it Bifurcations and Catastrophes: Geometry of Solutions to Nonlienar Problems}, Springer-Verlag, 2000.

\bibitem{DIR} {\sc F. Dumortier, Y. Ilyashenko and C. Rousseau}, Normal forms near a saddle-node and applications to finite cyclicity of graphics.
{\it Ergod. Th. Dynam. Sys.} {\bf 22}, (2002), 783-818.

\bibitem{JG} J. Gheiner, Codimension $n$ flips. {\it Ergod. Th. Dynam. Sys.} {\bf 18} (1998), 1115-1137.

\bibitem{GR} A. Guzm\'an and C. Rousseau, Genericity conditions for finite
cyclicity of elementary graphics. {\it J. Dif. Eq.} {\bf 155}
(1999), 44-72.

\bibitem{YL} Y. Ilyashenko and W. Li, {\it Nonlocal Bifurcations},
Mathematical Surveys and Monographs, {\bf vol. 66}, AMS
Providence, Rhode Island, 1999.

\bibitem{KS} A. Kotova, V. Stanzo, On few-parameter generic families of vector fields on the two-dimensional sphere,
{\it Translation Am. Math. Soc. Series 2}, {\bf 165} (1995),
155-201.

\bibitem{RR} {\sc R. Roussarie}, Bifurcation of planar vector fields and Hilbert's sixteenth problem,
Progress in Mathematics {\bf 164}, Birkh\"auser Verlag, Basel,
$1998$.

\end{thebibliography}
\end{document}